\newtheorem{theorem}{Theorem}
\newtheorem*{acknowledgement}{Acknowledgement}
\newtheorem{corollary}[theorem]{Corollary}
\newtheorem{definition}[theorem]{Definition}
\newtheorem{lemma}[theorem]{Lemma}
\newtheorem{proposition}[theorem]{Proposition}
\newtheorem{remark}[theorem]{Remark}
\numberwithin{equation}{section}
\numberwithin{theorem}{section}
\begin{document}
\title[Constant scalar curvature metrics on $\partial C(n,4)$]{Constant Scalar Curvature Metrics on Boundary Complexes of Cyclic Polytopes}
\author{Daniel Champion}
\address[Daniel Champion]{University of Arizona, Tucson AZ, 85721}
\email{champion@math.arizona.edu}
\author{Andrew Marchese}
\address[Andrew Marchese]{SUNY Stoneybrook}
\email{yandyydna@gmail.com}
\author{Jacob Miller}
\address[Jacob Miller]{UCSD}
\email{jam003@ucsd.edu}
\author{Andrea Young}
\address[Andrea Young]{University of Arizona, Tucson AZ, 85721}
\email{ayoung@math.arizona.edu}
\begin{abstract}
In \cite{Gl}, a notion of constant scalar curvature metrics on piecewise flat manifolds is defined.  Such metrics are candidates for canonical metrics on discrete manifolds.  In this paper, we define a class of vertex transitive metrics on certain triangulations of $\mathbb{S}^3$; namely, the boundary complexes of cyclic polytopes.    We use combinatorial properties of cyclic polytopes to show that, for any number of vertices, these metrics have  constant scalar curvature.
\end{abstract}
\keywords{Regge, Einstein-Hilbert, piecewise flat, constant scalar curvature, cyclic polytope}
\subjclass{52B70, 52C26, 83C27}

\maketitle
\section{Introduction}
A classical question in differential geometry is that of finding canonical metrics on manifolds.   For example, the well-known Yamabe problem asks whether a given smooth compact Riemannian manifold $(M^n,g)$, for $n \geq 3$, has a constant scalar curvature metric $\tilde{g}$ which is conformal to $g$.    This problem was posed by Yamabe in 1960, and a complete solution to the problem is due to the combined work of Yamabe, Trudinger, Aubin, and Schoen.  See \cite{LP} for background on the problem and its solution.

Constant scalar curvature metrics arise as critical points in a conformal class of the volume normalized Einstein-Hilbert functional.  To see this, recall that the  Einstein-Hilbert functional is given by
\[\mathcal{EH}(g)=\int_{M^n}R_gdV_g,
\]
where $R_g$ is the scalar curvature of $g$ and $dV_g$ is the volume form of $g$.  Under a conformal variation $\delta g=fg$, the scalar curvature satisfies
\[\delta R[fg]=(1-n)\Delta f-Rf,
\]
and the volume form satisfies
\[\delta dV[fg]=\frac{n}{2}fdV.
\]
Thus the variation of $\mathcal{EH}$ is
\[\delta \mathcal{EH}[fg]=(\frac{n}{2}-1)\int_{M^n}RfdV.
\]
Critical points of this equation have $R=0$.  To obtain constant scalar curvature metrics as critical points, one can take the variation of the volume normalized $\mathcal{EH}$ functional:
\[\mathcal{NEH}(g)=\frac{\int_{M^n}R_gdV_g}{(\int_{M^n}dV_g)^{\frac{n-2}{n}}}.
\]

One might like to pose an analogue to the Yamabe problem in a non-smooth setting; for example, on piecewise flat manifolds.  To do so, one would need a discrete notion of curvature and of conformal variations.  In \cite{Gl}, such concepts are defined on piecewise flat manifolds.  The constant scalar curvature metrics in the piecewise flat setting should be somehow analogous to those in the smooth setting.  Therefore, these metrics are defined to be critical points of a discrete analogue of the Einstein-Hilbert functional defined by T. Regge in 1961 \cite{Re}.   We will call this functional the Einstein-Hilbert-Regge functional, and we will denote it $\mathcal{EHR}$.   See \cite{Ham} for a survey of the study of this functional as an action for general relativity and of its use in the Regge calculus and lattice gravity.   In \cite{Che}, it was shown that the $\mathcal{EHR}$ functional converges to the $\mathcal{EH}$ functional in a certain sense.  The idea of discretizing smooth geometric notions forms the basis for fields such as discrete differential geometry and discrete exterior calculus (\cite{BSu}, \cite{DHM}, \cite{DP}, \cite{MDSB}). 

In \cite{CGY},  the behavior of the $\mathcal{EHR}$ functional was analyzed on the double tetrahedron.  The double tetrahedron is the simplest triangulation of $\mathbb{S}^3$, and it is not simplicial.  However, the triangulation is neighborly and vertex transitive.   This note can be thought of as an extension of that work, as the piecewise flat manifolds we will consider are homeomorphic to $\mathbb{S}^3$ and are both neighborly and vertex transitive.

This paper is organized as follows.  In \S2, we will provide background about piecewise flat manifolds, and we will define discrete constant scalar curvature metrics.  In \S3, we will define cyclic polytopes.  The boundary complexes of the cyclic polytopes will be our main object of study, and we will recall some facts about these triangulations of $\mathbb{S}^3$.  We will define a certain class of metrics on the boundary complexes of cyclic polytopes known as cyclic length metrics in \S4, and we will collect some combinatorial results.  In \S5, we will show that  the metrics do indeed have constant scalar curvature.  Finally, in \S6, we mention some open questions and directions for future research.

\begin{acknowledgement}
We would like to thank the participants and organizers of the 2010 Arizona Summer Program, during which this work took place.  We would especially like to thank David Glickenstein for helpful discussions.
\end{acknowledgement}

\section{Background and Notation}
In this section, we will provide the necessary background on piecewise flat manifolds.  We will also describe a notion of conformal structure in this setting and will define constant scalar curvature metrics.  We will follow closely the definitions in \cite{CGY} and \cite{Gl}.

\subsection{Piecewise flat manifolds}

We begin with a \emph{triangulated piecewise
flat manifold}.  The dimension of a
triangulation is that of its highest dimensional simplex. A three-dimensional
triangulation $\mathcal{T}=\left(  V,E,F,T\right) $ has a collection of
vertices (denoted $V$), edges (denoted $E$), faces (denoted $F$), and
tetrahedra (denoted $T$).   In this paper, the triangulations we consider will be three-dimensional
simplicial complexes.  Thus we will require that every face borders exactly two tetrahedra and that the link of every vertex is homeomorphic to $\mathbb{S}^2$.

\begin{definition}
A triangulation is said to be \emph{neighborly} if, for any pair of vertices, the edge connecting them belongs to the simplicial complex.
\end{definition}

\begin{definition}
An \emph{automorphism} is a function $f: V\to V$ such that, for every edge $e=(u,v) \in E$, $f(e)=(f(u), f(v))$ is also an edge.  A triangulation is said to be \emph{vertex transitive} if the automorphism group acts transitively on the vertices.
\end{definition}

A triangulated piecewise flat manifold is denoted as
$\left(  M,\mathcal{T},\ell\right)  ,$ where $M$ is a manifold, $\mathcal{T}$
is a triangulation of $M,$ and $\ell$ is a metric according to the following definition.

\begin{definition}
\label{met def gen}A vector $\ell\in\mathbb{R}^{\left\vert E\right\vert }$
such that each simplex can be realized as a Euclidean simplex with edge
lengths determined by $\ell$ is called a \emph{metric} for the triangulated
manifold $\left(  M,\mathcal{T}\right)  ,$ and $\left(  M,\mathcal{T}%
,\ell\right)  $ is called a \emph{triangulated piecewise flat manifold}.
\end{definition}

The condition for a metric can be described using Cayley-Menger
determinants of the type described in \cite{CGY}.  Notice that a choice of metric (i.e. a choice of edge lengths) completely determines the geometry of the triangulation in the following sense.  The angles of a face are determined by the edge lengths via the
cosine law. The dihedral angles of a tetrahedron can then be calculated from
the angles at the faces using the spherical cosine law. We use $\beta_{e}$ to
refer to the dihedral angle of a tetrahedron at edge $e$.    If we wish to
emphasize that it is in tetrahedron $t,$ we denote it as $\beta_{e<t}.$  In what follows, $\tau<\sigma$ or $\sigma>\tau$ will mean that $\tau$ is a sub-simplex
of $\sigma.$  For computational purposes, we will sometimes use the notation $\beta_{ij,kl}$ to refer the dihedral angle at edge $e_{ij}$ in tetrahedron $t_{ijkl}$. 

We will also call a metric $\ell$ on $\left( M, \mathcal{T} \right)$ \emph{vertex transitive} if the automorphism group acts transitively on the vertices.  In this case, every vertex will ``look the same'' both combinatorially and geometrically.

\subsection{Discrete conformal structures and constant scalar curvature metrics}

In this section, we will consider a certain conformal structure that has been studied in \cite{CGY}, 
\cite{Gl}, \cite{Luo1}, \cite{RW}.  This choice of conformal structure will allow us to define a notion of vertex curvature and ultimately of constant scalar curvature metrics.   
\begin{definition}
\label{conf class def}
Let $\{L_{e}\}_{e\in E}$ be such that $(M,\mathcal{T},L)$ is a piecewise flat
manifold. Let $V^{\ast}$ denote
the real-valued functions on the vertices, and let $U\subset V^{\ast}$ be an open set. A \emph{conformal structure}
is a map $U\rightarrow\mathfrak{met}\left(  M,\mathcal{T}\right)  $ determined
by
\begin{equation}
\ell_{e}\left(  f\right)  =\exp\left[  \frac{1}{2}\left(  f_{v}+f_{v^{\prime}%
}\right)  \right]  L_{e}, \label{pb conf}%
\end{equation}
where $e$ is the edge between $v$ and $v^{\prime}.$ The \emph{conformal class}
is the image of $U$ in $\mathfrak{met}\left(  M,\mathcal{T}\right)  ,$ and it
is entirely determined by $L.$ A \emph{conformal variation} $f\left(
t\right)  $ is a smooth curve $\left(  -\varepsilon,\varepsilon\right)
\rightarrow U$ for small $\varepsilon>0$, and it induces a \emph{conformal
variation of metrics} $\ell\left(  f\left(  t\right)  \right)  .$
\end{definition}

\begin{remark}
There is a more general notion of conformal structure on piecewise flat
manifolds that is described in \cite{Gl}. The conformal structure described
here is called the perpendicular bisector conformal structure in that paper.
\end{remark}

We define both the edge and vertex curvatures.   The edge curvature is independent of a conformal structure.  However,  as described in \cite{Gl}, the choice of conformal structure allows us to define the vertex curvature.  This will lead to a
definition of constant scalar curvature metrics.

\begin{definition}
The \emph{edge curvature} of an edge $e$ is
\begin{equation}
\label{edge curvature}
K_e=(2\pi-\sum_{t\in T}\beta_{e<t})\ell_e,
\end{equation}
where $\ell_e$ is the edge length of $e$.

The \emph{vertex curvature} of a vertex $v$ is%
\begin{equation}
\label{vertex curvature}
K_{v}=\frac{1}{2}\sum_{e>v}K_e,%
\end{equation}
where the sum is over all edges containing vertex $v$.
\end{definition}

We now define a discrete analogue of the Einstein-Hilbert functional.

\begin{definition}The \emph{Einstein-Hilbert-Regge functional} is
\begin{equation}
\mathcal{EHR}\left(  M,\mathcal{T},\ell\right)  =\sum_{v\in V}K_{v}.
\end{equation}
\end{definition}

Our goal is to define constant scalar curvature metrics in a way that is analogous to the smooth setting; namely, we would like to let these metrics be critical points of the normalized $\mathcal{EHR}$ functional.   In the smooth case, one usually considers a volume normalization.  However, in the discrete case, since the formula for volume of a simplex is quite complicated, one may also consider a normalization which is linear in the edge lengths.  Thus, we will  consider the two normalizations of the $\mathcal{EHR}$ functional given in \cite{CGY}.

To define these normalizations, we need the definitions of the total length and the total volume of the triangulation.

\begin{definition}
The \emph{total length} of $\left(  M,\mathcal{T},\ell\right)  $ is
\begin{equation}
\mathcal{L}\left(  M,\mathcal{T},\ell\right)  =\sum_{e\in E}\ell_{e}.
\end{equation}
Let $V_{t}$ be the volume of tetrahedron $t.$ Then the \emph{volume} of
$\left(  M,\mathcal{T},\ell\right)  $ is
\begin{equation}
\mathcal{V}\left(  M,\mathcal{T},\ell\right)  =\sum_{t\in T}V_{t}.
\end{equation}
\end{definition}

\begin{definition}
The \emph{length normalized Einstein-Hilbert-Regge functional} is
\begin{equation}
\mathcal{LEHR}\left(  M,\mathcal{T},\ell\right)  =\frac{\mathcal{EHR}\left(
M,\mathcal{T},\ell\right)  }{\mathcal{L}\left(  M,\mathcal{T},\ell\right)  }.
\label{LEHR}%
\end{equation}
The \emph{volume normalized Einstein-Hilbert-Regge functional} is
\begin{equation}
\mathcal{VEHR}\left(  M,\mathcal{T},\ell\right)  =\frac{\mathcal{EHR}\left(
M,\mathcal{T},\ell\right)  }{\mathcal{V}\left(  M,\mathcal{T},\ell\right)
^{1/3}}. \label{VEHR}%
\end{equation}

\end{definition}

The normalizations are defined so that the functionals take the same value if
all lengths are scaled by the same positive constant.

Given these definitions of normalized $\mathcal{EHR}$ functionals, we can compute their derivatives with respect to a conformal variation.   We define constant scalar curvature metrics to be critical points in a conformal class of the normalized $\mathcal{EHR}$ functionals.  The subsequent definitions follow immediately from the variation formulas of $\mathcal{LEHR}$ and $\mathcal{VEHR}$, which can be found in \cite{CGY}, Lemma 4.9.
\begin{definition}
\label{csc def}A three-dimensional piecewise flat manifold $(M,\mathcal{T}%
,\ell)$ has \emph{constant scalar curvature} if it is a critical point of one
of the normalized $\mathcal{EHR}$ functionals with respect to a conformal
variation. 

We say $(M,\mathcal{T},\ell)$ is $\mathcal{LCSC}$ if
\begin{equation}
K_{v}=\lambda_{\mathcal{L}}L_{v}, \label{Lcsc eqn}%
\end{equation}
for all $v\in V$. Here $\lambda_{\mathcal{L}}=\mathcal{LEHR}$, and $L_{v}=\frac{1}{2}\sum_{e>v}\ell_{e}$. 

We say
$(M,\mathcal{T},\ell)$ is $\mathcal{VCSC}$ if
\begin{equation}
K_{v}=\lambda_{\mathcal{V}}V_{v}, \label{Vcsc eqn}%
\end{equation}
for all $v\in V$. Here $\lambda_{\mathcal{V}}=\frac{\mathcal{EHR}%
}{3\mathcal{V}}$, $V_{v}=\frac{1}{3}\sum
_{t>f>v}h_{f<t}A_{f}$, $h_{f<t}$ is the signed distance between the circumcenters of the face $f$ and tetrahedron $t$, and $A_{f}$ is the area of face $f$.
\end{definition}

\begin{remark}
Note that
$\sum_{v\in V}L_{v}=\mathcal{L}$, and $\sum_{v\in V}V_{v}=3\mathcal{V}.$
\end{remark}

\section{Cyclic Polytopes}
Our primary objects of interest in the remainder of the paper will be certain triangulations of $\mathbb{S}^3$.  Specifically, we will study the piecewise flat 3-manifolds that are the boundary complexes of 4-dimensional cyclic polytopes.  These classical combinatorial manifolds are well-known and provide examples of neighborly triangulations of $\mathbb{S}^3$.  For the convenience of the reader, we define cyclic polytopes as in \cite{Gr}, and we recall some important facts.

\begin{definition}
The \emph{moment curve} is a map $M_4: \mathbb{R}\to \mathbb{R}^4$ defined parametrically as
\begin{equation}
\label{moment curve}
x(t)=(t,t^2,t^3,t^4).
\end{equation}
\end{definition}

\begin{definition}
A \emph{4-dimensional cyclic polytope}, $C(n,4)$, is the convex hull of $n\geq 5$ points $x(t_i)$ on $M_4$, where $t_1<t_2<\cdots<t_n$.
\end{definition}

We will define the set $V$ as follows:
\begin{equation}
\label{def of V}
V=\{x(t_i)\in \mathbb{R}^4 : 1 \leq i \leq n \}.
\end{equation}
Here we will denote elements of $V$ as $v_i$ for $1\leq i \leq n$, where $v_i=x(t_i)$.

 The following condition on the tetrahedra in 4-dimensional cyclic polytopes is due to Gale (see \cite{Gr}).

\begin{theorem}[Gale's Evenness Condition]
\label{Gale's}
Consider a cyclic 4-polytope $C(n,4)$. A set of four points $\tilde{V}\subseteq V$ determines a tetrahedron in $C(n,4)$ if and only if every two points in $V\setminus \tilde{V}$ are separated on $M_4$ by an even number of points of $\tilde{V}$.
\end{theorem}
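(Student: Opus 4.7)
The plan is to characterize facets of $C(n,4)$ via supporting hyperplanes and then translate the resulting geometric condition into a purely combinatorial one about parities of intervening indices. First I would note that $C(n,4)$ is a simplicial 4-polytope, which follows from the fact that any five points on $M_4$ are affinely independent (the Vandermonde determinant of $1,t_i,t_i^2,t_i^3,t_i^4$ is nonzero). Consequently, a four-element subset $\tilde{V}=\{v_{i_1},v_{i_2},v_{i_3},v_{i_4}\}$ determines a facet (equivalently, a tetrahedron of the boundary complex) precisely when the unique affine hyperplane $H_{\tilde{V}}$ spanned by these four points is a supporting hyperplane of $C(n,4)$; that is, the remaining vertices all lie strictly on a single side of $H_{\tilde{V}}$.

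Next I would encode $H_{\tilde{V}}$ as a polynomial. Any affine hyperplane in $\mathbb{R}^4$ is the zero set of a linear form $\varphi(x_1,x_2,x_3,x_4)=a_0+a_1x_1+a_2x_2+a_3x_3+a_4x_4$, whose restriction to $M_4$ equals the degree-at-most-$4$ polynomial $p(t)=a_0+a_1t+a_2t^2+a_3t^3+a_4t^4$. The condition that $H_{\tilde{V}}$ passes through $v_{i_1},\ldots,v_{i_4}$ says $p$ vanishes at the four distinct values $t_{i_1}<\cdots<t_{i_4}$, forcing $p(t)=c\prod_{j=1}^{4}(t-t_{i_j})$ for some $c\neq 0$. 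Evaluating at $t_k$ for $k\notin\{i_1,\ldots,i_4\}$ yields $\varphi(v_k)=p(t_k)=c\,(-1)^{m_k}\prod_{j=1}^{4}|t_k-t_{i_j}|$, where $m_k=\#\{j:t_{i_j}>t_k\}$. Hence $v_k$ lies strictly on one side of $H_{\tilde{V}}$, and which side is determined by the parity of $m_k$.

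Finally, I would observe that $H_{\tilde{V}}$ is a supporting hyperplane if and only if $m_k$ has the same parity for all $k\notin\{i_1,\ldots,i_4\}$. For any two such indices $k,\ell$ with $t_k<t_\ell$, the difference $m_k-m_\ell$ counts exactly those $j$ with $t_k<t_{i_j}<t_\ell$, i.e.\ the number of points of $\tilde{V}$ lying between $v_k$ and $v_\ell$ on $M_4$. Thus $m_k\equiv m_\ell\pmod{2}$ if and only if an even number of points of $\tilde{V}$ separates them, which is exactly the stated evenness condition.

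The main obstacle I anticipate is conceptual rather than computational: one must set up the correspondence between hyperplanes in $\mathbb{R}^4$ and univariate polynomials of degree at most $4$ cleanly, and verify that every $v_k$ is \emph{strictly} off $H_{\tilde{V}}$. The latter is automatic from the nondegeneracy of five points on $M_4$ noted above, since otherwise $p$ would have five distinct roots and vanish identically, contradicting $c\neq 0$. With that in hand, the parity argument is essentially immediate, and transitivity of equivalence mod $2$ reduces the all-pairs condition to the one already extracted from the polynomial.
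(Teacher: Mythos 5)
Your argument is correct and complete. Note that the paper itself offers no proof of this statement: it is quoted as a classical result of Gale with a citation to Gr\"unbaum, so there is no ``paper proof'' to compare against. What you have written is precisely the standard textbook argument (the one found in Gr\"unbaum and in Ziegler's \emph{Lectures on Polytopes}): identify facets of the simplicial polytope with supporting hyperplanes through four vertices, pull the linear form back to the moment curve to get $p(t)=c\prod_{j}(t-t_{i_j})$, and read off the side of the hyperplane from the parity of the number of roots exceeding $t_k$. You also correctly handle the two points where such proofs usually get sloppy --- the uniqueness of $H_{\tilde V}$ and the strictness of the separation --- by invoking the affine independence of any five points on $M_4$ via the Vandermonde determinant. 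Nothing further is needed.
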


It is well-known that for any $n\geq 5$, the boundary complex of $C(n,4)$ forms a neighborly triangulation of $\mathbb{S}^3$.  This triangulation is invariant under the vertex transitive action of the dihedral group $D_n$ \cite{KL}.
The triangulation has $n$ vertices (given by $V$ in Eq.~\ref{def of V}), $\frac{n(n-1)}{2}$ edges, $n^2-3n$ faces, and $\frac{n^2-3n}{2}$ tetrahedra.

\begin{definition}
We will let $\partial C(n,4)$ denote the triangulation of $\mathbb{S}^3$ given by the boundary complex of $C(n,4)$.
\end{definition}

We define a distinguished cycle, $\mathcal{C}$, to be the following set of edges:
\begin{equation}
\label{def of C}
\mathcal{C}=\{e\in E: e=v_iv_{i+1} \mbox{ for }1\leq i <n \mbox{ or } e=v_nv_1 \}.
\end{equation}
A straightforward application of Theorem~\ref{Gale's} allows us to ascertain when four points in $V$ determine a tetrahedron in $\partial C(n,4)$.  

\begin{corollary}
\label{edgecor}
A set of four points in  $\partial C(n,4)$  forms a tetrahedron if and only if those four points yield two distinct non-local edges in $\mathcal{C}$.
\end{corollary}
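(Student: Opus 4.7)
The strategy is a direct application of Gale's Evenness Condition. First, I would label the four chosen points in index order as $v_{i_1}, v_{i_2}, v_{i_3}, v_{i_4}$ with $i_1<i_2<i_3<i_4$, and partition the remaining $n-4$ indices into five ``gap sets'' $G_0,\ldots,G_4$, where $G_j$ collects the non-chosen indices lying strictly between $i_j$ and $i_{j+1}$ (with the conventions $i_0=0$ and $i_5=n+1$). The key reformulation is that a pair of non-chosen indices in $G_j$ and $G_k$ is separated on $M_4$ by exactly $|j-k|$ chosen indices, so Gale's condition becomes the statement that all non-empty $G_j$ share a common parity.

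Next, I would split into the two parity cases. Under the even-parity case, $G_1=G_3=\emptyset$, which forces $i_2=i_1+1$ and $i_4=i_3+1$. Then two of the six edges determined by the four points, namely $v_{i_1}v_{i_2}$ and $v_{i_3}v_{i_4}$, lie in $\mathcal{C}$; they are distinct and share no vertex because $i_3\geq i_1+2$. Under the odd-parity case, $G_0=G_2=G_4=\emptyset$ forces $i_1=1$, $i_3=i_2+1$, $i_4=n$, so the cycle edges are $v_{i_2}v_{i_3}$ and $v_nv_1$, again distinct and non-adjacent since $2\le i_2$ and $i_2+1\le n-1$. This proves the forward direction.

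For the converse, I would note that once one specifies two distinct non-adjacent edges of $\mathcal{C}$ among four chosen vertices, the configuration must match exactly one of the two patterns above: either neither chosen cycle edge is the wrap-around $v_nv_1$ (forcing the even-parity pattern $i_2=i_1+1$, $i_4=i_3+1$), or precisely one of them is the wrap-around edge (forcing the odd-parity pattern $i_1=1$, $i_3=i_2+1$, $i_4=n$). In each case the gap sets fulfil the common-parity requirement, so Gale's criterion is satisfied and the four points determine a tetrahedron.

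The main obstacle is cleanly translating Gale's Evenness Condition into the gap-parity statement and then recognizing that the two parity cases correspond precisely to the ``no wrap-around'' versus ``one wrap-around'' possibilities for the chosen cycle edges. Once that dictionary is in place, the non-adjacency (``non-local'') of the two cycle edges emerges automatically, and the remaining verification is straightforward combinatorial bookkeeping.
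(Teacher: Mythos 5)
Your proof is correct and is precisely the ``straightforward application of Gale's Evenness Condition'' that the paper invokes without writing out: the paper gives no argument for this corollary, and your gap-set/parity bookkeeping supplies exactly the missing details (the even-parity case giving two non-wrap-around cycle edges, the odd-parity case giving one cycle edge plus $v_nv_1$, and the converse check that each such pair of vertex-disjoint cycle edges satisfies the evenness condition). No gaps; this is the intended argument, fully carried out.
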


The simplest example of a cyclic $4$-polytope is $C(5,4)$, which is the $4$-simplex.  In this case, $\partial C(5,4)$ is known as the pentachoron, which is the smallest simplicial triangulation of $\mathbb{S}^3$.  See \cite{Ch} for many geometric results about this manifold.

\section{Cyclic length metrics on $\partial C(n,4)$}
In this section, we  define certain metrics on  $\partial C(n,4)$, known as cyclic length metrics.  We will collect some useful facts about these metrics.   In the next section, we will use these facts to show that these metrics have constant scalar curvature.

\begin{definition}
Let $\ell$ be a metric on $\partial C(n,4)$.  Then $\ell$ is called a \emph{cyclic length metric} if the length of any edge $e_{ij}$ is a function of the minimum number of edges between $v_i$ and $v_j$ on $\mathcal{C}$, where $\mathcal{C}$ is the distinguished cycle defined by Eq.~\ref{def of C}.  
\end{definition}
\noindent
See Figure~\ref{cyclic metric pic} for a representation of a cyclic length metric on $\partial C(11,4)$.

We will denote the minimal number of edges between $v_i$ and $v_j$ on $\mathcal{C}$ as $D_{ij}$.  Note also that although the lengths of each edge $e_{ij}$ are a function of  $D_{ij}$, these lengths must still satisfy the criteria required of a metric; i.e. the Cayley-Menger determinant must be positive for all tetrahedra.

\begin{remark}
\label{cl is vt}
By construction, a cyclic length metric $\ell$ on $\partial C(n,4)$ is vertex transitive.
\end{remark}

On the pentachoron, $\partial C(5,4)$, any vertex transitive metric is a cyclic length metric.  However, in general there may be vertex transitive metrics on $\partial C(n,4)$ that are not cyclic length metrics.

When considering  cyclic length metrics on $\partial C(n,4)$, we require additional information about the combinatorics and symmetries of these manifolds. The analysis will vary slightly depending on whether $n$ is odd or even; for reasons to be made clear below, we let odd $n=2m+3$ and even $n=2m+2$.  We summarize the results of this section in the following table:

\begin{table}[h]
\caption{Properties of cyclic length metrics on $\partial C(n,4)$}
    \begin{tabular}{lll}
    Quantity of interest \hspace{.7in} & $n=2m+3$ \hspace{.7in} & $n=2m+2$\\
    \hline \\
    Number of distinct edge lengths &  $m+1$ & $m+1$\\
    &&\\
    Number of tetrahedra &  $2m^2+3m$ & $2m^2+m-1$\\
    &&\\
   Types of tetrahedra &  $m$  & $m$ \\
   &&\\
   Number of each type & $2m+3$ & \parbox{3.5cm}{$2m+2$\\ ( $m+1$ of type $T^{e}_m$)}\\
    \end{tabular}
\end{table}

\begin{center}
\begin{figure}
\includegraphics[height=60mm]{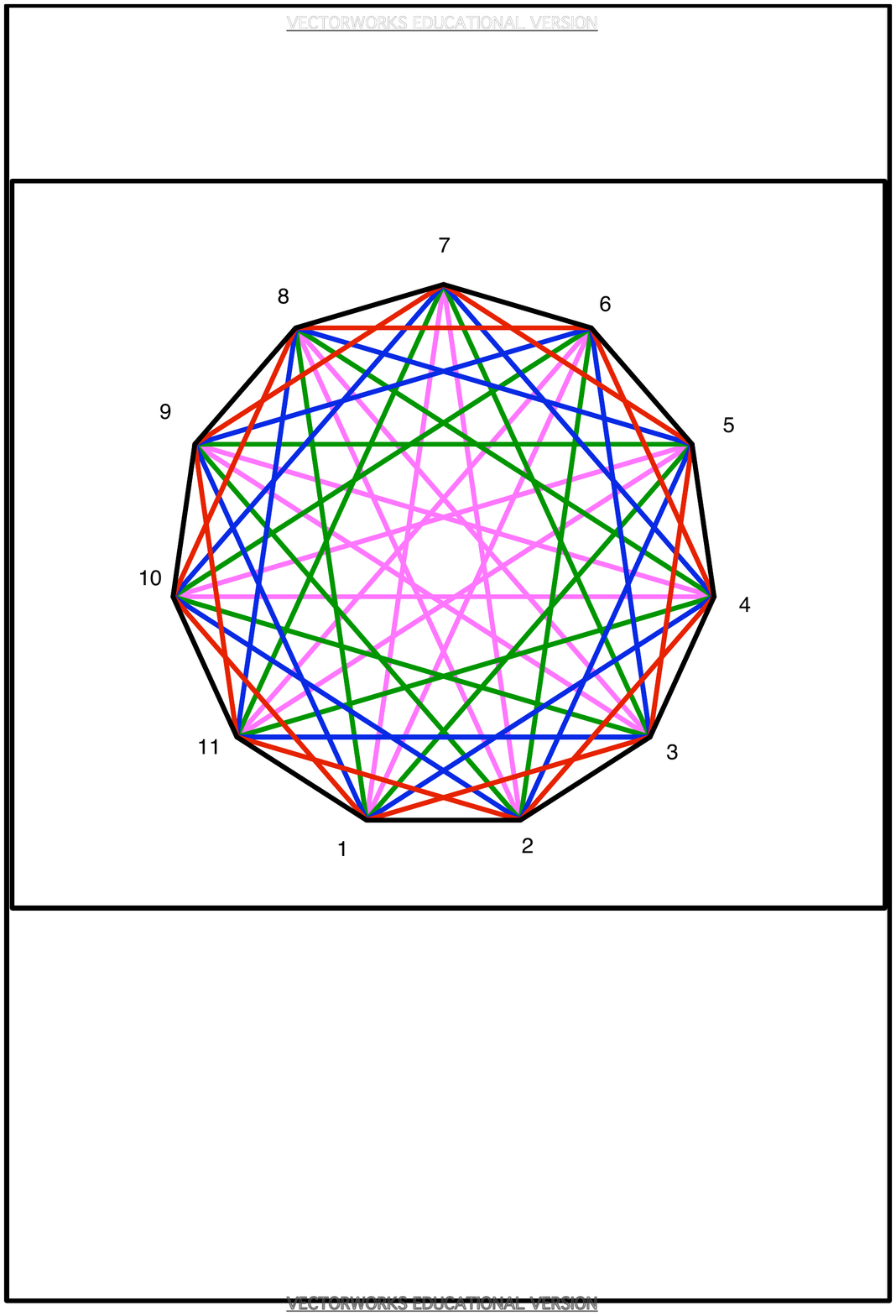}
\caption{A two-dimensional representation of  $\partial C(11,4)$ with a cyclic length metric}
\label{cyclic metric pic}
\end{figure}
\end{center}

\begin{lemma}
	\label{com}
Let  $\ell$ be a cyclic length metric on $\partial C(n,4)$.  
\begin{enumerate} 
\item Let $m \in \mathbb{N}$ be such that $n=2m+3$.  The triangulation has the following properties:
\begin{enumerate}
\item  There can be up to a total of $m+1$ distinct edge lengths.
\item There are $2m^2+3m$ total tetrahedra in the triangulation.
\end{enumerate}
\medskip
\item Let $m\in \mathbb{N}$ be such that $n=2m+2$.  The triangulation has the following properties:
 \begin{enumerate}
\item  There can be up to a total of m+1 distinct edge lengths.
\item There are $2m^2+m-1$ total tetrahedra in the triangulation.
\end{enumerate}
\end{enumerate}
\end{lemma}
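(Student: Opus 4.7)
The plan is to observe that both statements are direct combinatorial consequences of facts already recorded in the paper: the definition of a cyclic length metric, the structure of the distinguished cycle $\mathcal{C}$, and the formula $\frac{n^2-3n}{2}$ stated earlier for the number of tetrahedra in $\partial C(n,4)$.

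For part (a) in each case, the edge length of $e_{ij}$ depends only on the graph distance $D_{ij}$ along the cycle $\mathcal{C}$. Since $\mathcal{C}$ has $n$ vertices, $D_{ij}$ ranges over $\{1, 2, \ldots, \lfloor n/2 \rfloor\}$, with the maximum attained at antipodal pairs on the cycle. Writing $n=2m+3$ gives $\lfloor n/2\rfloor = m+1$, and writing $n=2m+2$ also gives $\lfloor n/2\rfloor = m+1$. In each case this yields at most $m+1$ distinct possible edge lengths. (The word \emph{up to} is important: for specific metrics some of these lengths could coincide; here we are only bounding the number of combinatorial types.)

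For part (b) in each case, we use that $\partial C(n,4)$ has exactly $\frac{n^2-3n}{2}$ tetrahedra. Substituting $n=2m+3$:
\begin{equation*}
\frac{(2m+3)^2 - 3(2m+3)}{2} = \frac{4m^2 + 12m + 9 - 6m - 9}{2} = \frac{4m^2 + 6m}{2} = 2m^2 + 3m.
\end{equation*}
Substituting $n=2m+2$:
\begin{equation*}
\frac{(2m+2)^2 - 3(2m+2)}{2} = \frac{4m^2 + 8m + 4 - 6m - 6}{2} = \frac{4m^2 + 2m - 2}{2} = 2m^2 + m - 1.
\end{equation*}
This gives the claimed tetrahedron counts.

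There is no real obstacle here; the lemma is a bookkeeping statement packaging arithmetic identities and the distance structure on $\mathcal{C}$. The only item requiring any care is noting that on a cycle of even length $n=2m+2$ antipodal pairs correspond to the single maximal distance $m+1$, so that the total number of distance classes remains $m+1$ rather than being inflated or deflated relative to the odd case. Once this is noted, everything follows by direct substitution into the formulas from Section 3.
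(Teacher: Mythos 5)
Your proof is correct and follows essentially the same route as the paper: part (a) is the identical observation that $D_{ij}\le\lfloor n/2\rfloor=m+1$, and part (b) differs only in that you substitute into the previously stated count $\frac{n^2-3n}{2}$ while the paper re-derives that same quantity as $\frac{n(n-3)}{2}$ by counting pairs of non-local edges on $\mathcal{C}$ via Corollary~\ref{edgecor}. Both computations are the same arithmetic, so there is nothing to add.
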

\begin{proof}
We begin with the case of $n=2m+3$.  To prove the first statement, we choose two arbitrary distinct $v_i, v_j\in$V. Since there are $2m+3$ vertices, there are $2m+3$ edges in the distinguished cycle $\mathcal{C}$ (see Eq.~\ref{def of C}).    Recall that the edge lengths in a cyclic length metric are determined by the minimum number of edges between $v_i$ and $v_j$ on $\mathcal{C}$.   Notice that the minimum number of edges in $\mathcal{C}$ separating $v_i$ and $v_j$ is at most $\lfloor \frac{2m+3}{2} \rfloor$.  This is $m+1$, as claimed.

For the second statement, choose an arbitrary edge in $\mathcal{C}$ and a second edge in $\mathcal{C}$ not local to the first edge.  By Corollary~\ref{edgecor}, all such combinations will generate the tetrahedra in the triangulation. The number of these combinations, and thus the number of  tetrahedra, is exactly $$\frac{(2m+3)((2m+3)-3)}{2} = 2m^2+3m.$$

In the case that $n=2m+2$, the analysis follows in the same fashion when one recalls there are $2m+2$ edges in the distinguished cycle $\mathcal{C}$.
\end{proof}

\begin{corollary}
\label{path length}
Let  $\ell$ be a cyclic length metric on $\partial C(n,4)$.    Recall that the minimum number of edges between $v_i$ and $v_j$ on $\mathcal{C}$ is denoted $D_{ij}$.
\begin{enumerate} 
\item Let $m \in \mathbb{N}$ be such that $n=2m+3$. If there exists a path of $p$ distinct edges between $v_i$ and $v_j$ on $\mathcal{C}$ and $p\leq m+1$, then $D_{ij}=p$. If $m+1<p$, then $D_{ij}=(2m+3) - p$.
\item Let $m \in \mathbb{N}$ be such that $n=2m+2$.  If there exists a path of $p$ distinct edges between $v_i$ and $v_j$ on $\mathcal{C}$ and $p\leq m+1$,  then $D_{ij}=p$. If $m+1<p$, then $D_{ij}=(2m+2) - p$.
\end{enumerate}
\end{corollary}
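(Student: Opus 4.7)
The plan is to exploit the very simple structure of the distinguished cycle $\mathcal{C}$: it is, by definition, a single cycle through all $n$ vertices, so any two distinct vertices $v_i,v_j$ are joined along $\mathcal{C}$ by exactly two edge-disjoint paths, and the sum of their lengths is $n$. Since $D_{ij}$ is defined as the minimum number of edges between $v_i$ and $v_j$ on $\mathcal{C}$, we have
\[
D_{ij}=\min(p,\,n-p),
\]
where $p$ is the length of the given path. The whole corollary then reduces to showing which of $p$ and $n-p$ is smaller in each of the two parity cases.

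First I would fix the setup: write $n=2m+3$ (resp.\ $n=2m+2$), note from Lemma~\ref{com} (or directly from Eq.~\ref{def of C}) that $\mathcal{C}$ consists of $n$ edges, and observe that the two paths along $\mathcal{C}$ from $v_i$ to $v_j$ have lengths $p$ and $n-p$. Next I would carry out the two-line case split. In the odd case $n=2m+3$, if $p\le m+1$ then $n-p\ge m+2>p$, so $D_{ij}=p$; and if $p\ge m+2$ then $n-p\le m+1<p$, so $D_{ij}=n-p=(2m+3)-p$. In the even case $n=2m+2$, if $p\le m+1$ then $n-p\ge m+1\ge p$, so $D_{ij}=p$ (the extremal case $p=m+1$ giving $n-p=m+1=p$, which is consistent either way); and if $p\ge m+2$ then $n-p\le m<p$, so $D_{ij}=(2m+2)-p$.

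There is essentially no obstacle here: the statement is a direct consequence of the definition of $D_{ij}$ together with the fact that $\mathcal{C}$ is a Hamiltonian cycle of length $n$. The only place one must be a little careful is the boundary between the two regimes, and in particular the equality $p=m+1=n-p$ in the even case, which the statement handles by placing it in the first alternative. Because of this, the proof can be written in a few lines, and the main content of the corollary is really just a convenient restatement of ``distance on a cycle'' that will be used repeatedly in the dihedral-angle computations of the next section.
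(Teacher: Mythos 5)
Your proof is correct and is essentially the same argument the paper intends: the paper's own proof is just the one-line remark that the corollary ``follows directly from Lemma~\ref{com},'' whose proof rests on exactly your observation that $\mathcal{C}$ is an $n$-cycle so $D_{ij}=\min(p,n-p)$. Your write-up simply makes the case split explicit, including the correct handling of the boundary case $p=m+1$ when $n=2m+2$.
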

\begin{proof}
These facts follow directly from Lemma \ref{com}.
\end{proof}

Recall that Euclidean tetrahedra are completely determined by their edge lengths.  The following lemma gives an upper bound for the different length structures that appear on tetrahedra in $(\partial C(n,4), \ell)$.
\begin{lemma}
\label{tetralemma}
Let $\ell$ a cyclic length metric on $\partial C(n,4)$.  
\begin{enumerate}
\item  Let $n=2m+3$.  There are at most $m$ distinct types of tetrahedra in the triangulation, denoted type $T_k$ for $1\leq k < m$ and $T_m^o$ for $k=m$.   For each $k$, there are 2m+3 tetrahedra of  type $T_k$ in the triangulation.
\medskip
\item Let $n=2m+2$.  There are $m$ distinct types of tetrahedra in the triangulation, denoted type $T_k$ for $1\leq k < m$ and $T_m^e$ for $k=m$.  For $1\leq k <m$, there are $2m+2$ type $T_k$ tetrahedra in the triangulation, whereas there are $m+1$ type $T^{e}_m$ tetrahedra.
\end{enumerate}
\end{lemma}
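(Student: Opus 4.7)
The plan is to put the tetrahedra of $\partial C(n,4)$ in bijection with unordered pairs of non-local edges of $\mathcal{C}$ via Corollary~\ref{edgecor}, and then to classify these pairs by the single rotational invariant that controls the edge lengths under a cyclic length metric. Given such a pair $\{e_{i,i+1}, e_{j,j+1}\}$, removing the two edges splits $\mathcal{C}$ into two arcs of lengths $a$ and $b$ (counted in edges), with $a + b = n - 2$ and $a, b \geq 1$. After relabeling so that $a \leq b$, the parameter $a$ ranges over $\{1, \dots, \lfloor (n-2)/2 \rfloor\} = \{1, \dots, m\}$ in both parities, which accounts for the claimed bound of $m$ types.

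Next I would verify that the isometry class of the tetrahedron $\{v_i, v_{i+1}, v_j, v_{j+1}\}$ depends only on $a$. A short computation using Corollary~\ref{path length} shows that its six edges have $D$-distances $\{1,\, 1,\, a+1,\, a+1,\, a,\, \min(a+2, b)\}$: two cycle edges, a pair of ``parallel'' edges at $D = a+1$, and two remaining edges. Since a cyclic length metric is a function of $D$, the edge-length multiset depends only on $a$. I would label the corresponding class $T_k$ for $1 \leq k < m$ and isolate it as $T_m^{o}$ (resp.\ $T_m^{e}$) when $k = m$, since in both parities this top value is the unique one where $\min(a+2, b)$ deviates from the generic $a + 2$, giving the middle tetrahedron a distinct length structure.

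The remaining step is to count tetrahedra of each type. For $a < b$, a pair is determined by choosing any edge of $\mathcal{C}$ and pairing it with the edge at forward arc-distance $a$; this yields $n$ unordered pairs with no double counting because $a \neq b$ makes the two chosen edges play asymmetric roles. In the odd case $n = 2m+3$, $a + b = 2m+1$ is odd, so $a \neq b$ always and each of the $m$ types contributes $n = 2m + 3$ tetrahedra, totaling $m(2m+3) = 2m^2 + 3m$, in agreement with Lemma~\ref{com}. In the even case $n = 2m+2$, the types $T_k$ with $k < m$ each still give $n = 2m+2$ tetrahedra; for $T_m^{e}$ we have $a = b = m$, the unordered pair acquires a $\mathbb{Z}/2$ symmetry swapping the two antipodal edges, and the count drops to $n/2 = m+1$. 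The total $(m-1)(2m+2) + (m+1) = 2m^2 + m - 1$ again matches Lemma~\ref{com}.

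The main obstacle is the $a = b$ case: it is the unique configuration in which the naive rule ``one partner per starting edge'' overcounts unordered pairs by a factor of two. Pinning down that this arises only when $n$ is even and $a = m$, and that it is responsible both for the separate notation $T_m^e$ and for its anomalous count $m+1$ (rather than $n$), is the crux of the combinatorial bookkeeping that distinguishes parts (1) and (2).
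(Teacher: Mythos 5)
Your proof is correct and follows essentially the same route as the paper's: both identify tetrahedra with unordered pairs of non-local edges of $\mathcal{C}$ via Corollary~\ref{edgecor}, classify them by the gap between the two edges (your arc length $a$ is the paper's $D_{(i+1)j}=p$), and count by the same double-counting over cycle edges, with the $a=b=m$ symmetry in the even case producing the anomalous count $m+1$ for $T_m^e$. Your single formula $\{1,1,a,a+1,a+1,\min(a+2,b)\}$ for the $D$-multiset is a slightly cleaner packaging of the paper's case-by-case inspection ($p<m$, $p=m$, $p>m$), but the underlying argument is the same.
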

\begin{proof}
To start, let $n=2m+3$.  By the proof of Lemma \ref{com}.1, there are at most $m+1$ edge lengths in the triangulation. Let the set of these edge lengths be given by $\{L_1, L_2,...,L_{m+1}\}$, where $L_k$ corresponds to the case that $D_{ij}=k$.  Clearly the maximum number of tetrahedra is generated when all of the $L_i$ are distinct, so we will consider that case.  Choose two non-local edges in $\mathcal{C}$ that have vertices $v_i, v_{i+1}, v_j, v_{j+1}$.   By Corollary~\ref{edgecor}, these four vertices generate a tetrahedron in $\partial C(2m+3,4)$.  We claim that this tetrahedron is determined by the distance between $v_{i+1}$ and $v_{j}$ on $\mathcal{C}$, i.e., by $D_{(i+1)j}$.  

We first consider the case that $D_{(i+1)j}<m$; say, $D_{(i+1)j}=p$ for $p<m$.  Notice that the tetrahedron generated by the vertices $v_i, v_{i+1}, v_j, v_{j+1}$ will have the following edge lengths:
$\ell(e_{i(i+1)})=\ell(e_{j(j+1)})=L_1$, $\ell(e_{(i+1)j})=L_p$, $ \ell(e_{ij})=\ell(e_{(i+1)(j+1)})=L_{p+1}$, $\ell(e_{i(j+1)})=L_{p+2}.$   See Figure~\ref{T_p tetra}.  Since the $L_p$ are distinct, this gives $m-1$ distinct tetrahedra in $\partial C(2m+3,4)$.   We will call each of these types of tetrahedra \emph{type $T_p$}.
\begin{center}  
\begin{figure}
\includegraphics[height=60mm]{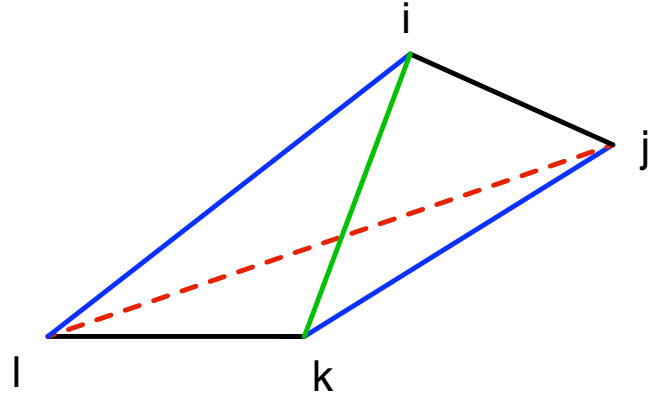}
\caption{Type $T_p$ tetrahedron: $\ell_{ij}=\ell_{kl}=L_1$, $\ell_{ik}=L_p$, $\ell_{il}=\ell_{jk}=L_{p+1}$, $\ell_{jl}=L_{p+2}$}
\label{T_p tetra}
\end{figure}
\end{center}

The second case is that $D_{(i+1)j}=m$.  In this case, the tetrahedron generated by the vertices $v_i, v_{i+1}, v_j, v_{j+1}$ will have the following edge lengths:
$\ell(e_{i(i+1)})=\ell(e_{j(j+1)})=L_1$, $\ell(e_{(i+1)j})=L_m$, $ \ell(e_{ij})=\ell(e_{(i+1)(j+1)})=\ell(e_{i(j+1)})=L_{m+1}.$   See Figure~\ref{T_m^o tetra}.  There is clearly one type of tetrahedra of this form in $\partial C(2m+3,4)$.  We will call this type of tetrahedron \emph{type $T^{o}_m$}.

\begin{center}
\begin{figure}
\includegraphics[height=60mm]{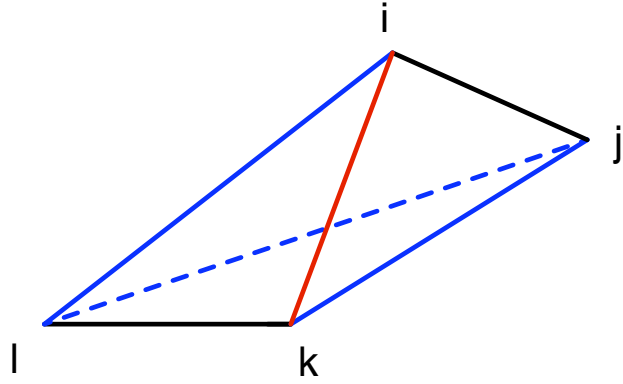}
\caption{Type $T^{o}_m$ tetrahedron: $\ell_{ij}=\ell_{kl}=L_1$, $\ell_{ik}=L_m$, $\ell_{il}=\ell_{jk}=\ell_{jl}=L_{m+1}$}
\label{T_m^o tetra}
\end{figure}
\end{center}

It is also possible that $D_{(i+1)j}>m$; however, by Corollary~\ref{path length}, we must then have $D_{i(j+1)}<m$, so we do not generate any additional types of tetrahedra.

Finally, we show how many of each type of  tetrahedra exist in our triangulation when $n=2m+3$.  There are  $2m+3$ edges on $\mathcal{C}$.  Notice that exactly two tetrahedra of each type will be formed with each of these edges.  However, in this process we double count the tetrahedra, so the total number of tetrahedra of a given type is $2m+3$.

\bigskip
Now consider the case that $n=2m+2$.  By Lemma~\ref{com}.2, there are at most $m+1$ edge lengths in the triangulation. As above, let the set of all these edge lengths be given by $\{L_1, L_2,...,L_{m+1}\}$, where $L_k$ corresponds to the case that $D_{ij}=k$, and all of the $L_k$ are distinct.  Choose two non-local edges in $\mathcal{C}$ having vertices $v_i, v_{i+1}, v_j, v_{j+1}$.   These four vertices generate a tetrahedron in $\partial C(2m+2,4)$, and we claim that this tetrahedron is determined by $D_{(i+1)j}$.

The case that $D_{(i+1)j}<m$ follows in exactly the same fashion as the corresponding case in the proof for $n=2m+3$.  We obtain $m-1$ distinct types of tetrahedra with edge lengths as in Figure~\ref{T_p tetra}, and we call each of these types of tetrahedra type $T_p$.

In the case that $D_{(i+1)j}=m$, we have a difference in the edge lengths as compared to the odd vertex case.  The key fact is that, in this setting, $D_{i(j+1)}=m$.  Thus, the tetrahedron generated by $v_i, v_{i+1}, v_j, v_{j+1}$ will have the following edge lengths:  $\ell(e_{i(i+1)})=\ell(e_{j(j+1)})=L_1$, $\ell(e_{(i+1)j})=\ell(e_{i(j+1)})=L_m$, $ \ell(e_{ij})=\ell(e_{(i+1)(j+1)})=L_{m+1}.$   See Figure~\ref{T_m^e tetra}.  We call this a type $T^{e}_m$ tetrahedron.

\begin{center}
\begin{figure}
\includegraphics[height=50mm]{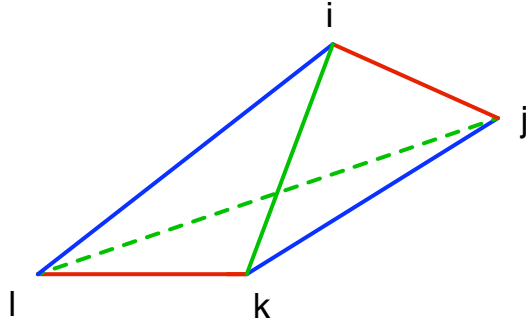}
\caption{Type $T^{e}_m$ tetrahedron: $\ell_{ij}=\ell_{kl}=L_1$, $\ell_{ik}=\ell_{jl}=L_m$, $\ell_{il}=\ell_{jk}=L_{m+1}$}
\label{T_m^e tetra}
\end{figure}
\end{center}
As we saw previously, if $D_{(i+1)j}>m$, no new tetrahedra are generated.
\bigskip

Finally, we show  how many of each type of tetrahedra are in our triangulation when $n=2m+2$.  There are $2m+2$ edges on $\mathcal{C}$.  Exactly two tetrahedra of each type $T_k$ for $1\leq k <m$ will be formed with each of these edges. Further, each edge will form exactly one type $T_m^e$ tetrahedron. In this process, we double count the tetrahedra.  So for $1\leq k <m$, the total number of type $T_k$ is $\frac{2(2m+2)}{2}=2m+2$. Similarly, the total number of type $T^e_m$ is $\frac{2m+2}{2}=m+1$.

\end{proof}

Notice that the tetrahedra of type $T_m^e$ have all pairs of opposite edges equal.  These tetrahedra are known as ``equihedral'', ``isosceles'', or ``equifacial'' tetrahedra in the literature, and they are very well-studied.  Indeed, there are over 100 equivalent conditions that characterize the equihedral tetrahedra.  They also give rise to constant scalar curvature metrics  
on the double tetrahedron (see \cite{CGY}).  One of the equivalent conditions that determines an equihedral tetrahedron  is that opposite dihedral angles are equal.  The following proposition shows an analogue of that condition for tetrahedra of type $T_k$.

\begin{proposition}
\label{eq length eq dih}
Let $\ell$ be a cyclic length metric on $\partial C(n,4)$.  In a tetrahedron of type $T_k$ for $1\leq k \leq m$, edges with equal lengths have the same dihedral angles.
\end{proposition}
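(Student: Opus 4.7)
My plan is to exploit the rigidity of Euclidean simplices: a Euclidean tetrahedron is determined up to isometry by its six edge lengths, so any permutation of the four vertices that preserves the edge-length function is realized by a genuine Euclidean isometry of the tetrahedron. Since isometries preserve dihedral angles, finding such a permutation that swaps two equal-length edges forces the dihedral angles at those edges to coincide.

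For a tetrahedron of type $T_k$ with $1 \le k < m$, I would consider the double transposition $\sigma = (i\,k)(j\,l)$ acting on the four vertices and check directly from Figure~\ref{T_p tetra} that $\ell_{\sigma(a)\sigma(b)} = \ell_{ab}$ for every pair: the opposite edges $e_{ik}$ (length $L_k$) and $e_{jl}$ (length $L_{k+2}$) are fixed, and the pairs $e_{ij}\leftrightarrow e_{kl}$ (both $L_1$) and $e_{il}\leftrightarrow e_{jk}$ (both $L_{k+1}$) are interchanged within matching length classes. Rigidity then upgrades $\sigma$ to an isometry and immediately yields $\beta_{e_{ij}} = \beta_{e_{kl}}$ and $\beta_{e_{il}} = \beta_{e_{jk}}$, which are exactly the equal-length pairs.

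The same double transposition works for the edge-case tetrahedra. For $T_m^o$ (Figure~\ref{T_m^o tetra}), $\sigma = (i\,k)(j\,l)$ again preserves all six edge lengths, fixing $e_{ik}$ and $e_{jl}$ and swapping the matched pairs $(e_{ij},e_{kl})$ and $(e_{il},e_{jk})$; this gives the claimed equalities of dihedral angles on pairs of equal-length opposite edges. For $T_m^e$ (Figure~\ref{T_m^e tetra}), the tetrahedron is isosceles and the paper itself records that opposite dihedral angles are equal; alternatively, all three double transpositions $(ij)(kl)$, $(ik)(jl)$, $(il)(jk)$ preserve the edge lengths, and each supplies one of the three equalities via the same rigidity argument.

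I do not expect a serious obstacle. The only real content is verifying invariance of the length function under $\sigma$, which is an inspection of the four figures, and quoting the uniqueness of a Euclidean tetrahedron up to congruence given its six edge lengths (an application of the Cayley--Menger formalism already invoked in \S 2). The broader moral is that the combinatorial symmetry of the edge-length data, once matched against Euclidean rigidity, automatically produces the desired geometric symmetry of dihedral angles.
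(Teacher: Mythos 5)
Your proof is correct, and it takes a genuinely different route from the paper's. The paper proves the statement by brute force: it writes out $\beta_{il}$ and $\beta_{jk}$ via the spherical law of cosines in terms of generic lengths $a,b,c,d$ and simplifies both expressions to the same quantity. You instead observe that the double transposition $(i\,k)(j\,l)$ preserves the edge-length function of a type $T_k$ (and $T_m^o$, $T_m^e$) tetrahedron, invoke the fact that a nondegenerate Euclidean tetrahedron is determined up to congruence by its six edge lengths, and conclude that the permutation is realized by an isometry, which carries $\beta_{e}$ to $\beta_{\sigma(e)}$. This is sound (the isometry may be orientation-reversing, but dihedral angles are unsigned, so nothing is lost), it treats both equalities $\beta_{ij}=\beta_{kl}$ and $\beta_{il}=\beta_{jk}$ simultaneously, it handles all three tetrahedron types uniformly, and it replaces the paper's algebra with a one-line verification from the figures. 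One clarification worth recording if you write this up: the rigidity argument equates dihedral angles only at edges exchanged by a length-preserving vertex permutation, so for $T_m^o$ it does not (and should not) give $\beta_{jl}=\beta_{il}$ even though $\ell_{jl}=\ell_{il}=L_{m+1}$ --- those two edges sit opposite edges of different lengths and their dihedral angles are in general unequal. You already restrict your conclusion to pairs of equal-length \emph{opposite} edges, which is exactly what the paper's computation establishes and exactly what is used later in Lemma~\ref{edgecurve}, so the two proofs prove the same (correctly interpreted) statement.
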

\begin{proof}
Consider a type $T_k$ tetrahedron for some $k \in [1, m]$.  By Lemma~\ref{tetralemma}, there are two pairs of opposite edges that have equal lengths and one pair that may or may not.  (If all of the edge lengths are distinct, then this pair will not have equal lengths, except in the case of $T_m^e$.)  Let the vertices of the tetrahedron be labeled as in Figure~\ref{T_p tetra}.   Let $\ell_{ij}=\ell_{kl}=a$, $\ell_{ik}=b$, $\ell_{il}=\ell_{jk}=c$, and $\ell_{jl}=d$.  We will show that $\beta_{il}=\beta_{jk}$.  The argument that $\beta_{ij}=\beta_{kl}$ follows in the same way.

One can use the spherical law of cosines to compute $\beta_{il}$ to be
\begin{equation}
\label{beta il}
\beta_{il}=\cos ^{-1}\left(\frac{\frac{a^2+b^2-c^2}{2 a
   b}-\frac{\left(-a^2+b^2+c^2\right) \left(a^2+c^2-d^2\right)}{4 a b
   c^2}}{\sqrt{1-\frac{\left(-a^2+b^2+c^2\right)^2}{4 b^2 c^2}}
   \sqrt{1-\frac{\left(a^2+c^2-d^2\right)^2}{4 a^2 c^2}}}\right).
\end{equation}

Similarly, 
\begin{equation}
\label{beta jk}
\beta_{jk}=\cos ^{-1}\left(\frac{\frac{a^2-c^2+d^2}{2 a
   d}-\frac{\left(a^2-b^2+c^2\right) \left(-a^2+c^2+d^2\right)}{4 a c^2
   d}}{\sqrt{1-\frac{\left(a^2-b^2+c^2\right)^2}{4 a^2 c^2}}
   \sqrt{1-\frac{\left(-a^2+c^2+d^2\right)^2}{4 c^2 b^2}}}\right).
\end{equation}
Algebraic manipulation shows that Eq.~\ref{beta il} becomes
\begin{equation}
\beta_{il}=\cos ^{-1}\left(\frac{a^4-\left(b^2-2 c^2+d^2\right) a^2-3 c^4+c^2
   d^2+b^2 \left(c^2+d^2\right)}{4 a b c^2
   \sqrt{\frac{\left(a^4-2 \left(b^2+c^2\right) a^2+\left(b^2-c^2\right)^2\right)
   \left(a^4-2 \left(c^2+d^2\right) a^2+\left(c^2-d^2\right)^2\right)}{16
   a^2 b^2c^4 }}}\right),
\end{equation}
while Eq.~\ref{beta jk} becomes
\begin{equation}
\beta_{jk}=\cos ^{-1}\left(\frac{a^4-\left(b^2-2 c^2+d^2\right) a^2-3 c^4+c^2
   d^2+b^2 \left(c^2+d^2\right)}{4 a c^2d
   \sqrt{\frac{\left(a^4-2 \left(b^2+c^2\right) a^2+\left(b^2-c^2\right)^2\right)
   \left(a^4-2 \left(c^2+d^2\right) a^2+\left(c^2-d^2\right)^2\right)}{16
   a^2 c^4 d^2}}}\right).
\end{equation}
Thus we see that $\beta_{il}=\beta_{jk}$.
\end{proof}

\begin{lemma}
\label{fourdis}
Let $\ell$ be a cyclic length metric on $\partial C(n,4)$.   
\begin{enumerate}
\item  Let $n=2m+3$.  Any vertex $v_i \in V$ is local to four tetrahedra of the types $T_k$, with $1 \leq k<m$, and to four tetrahedra of the type $T^o_m$.
\medskip
\item  Let $n=2m+2$.   Any vertex $v \in V$ is local to  four tetrahedra of the type $T_k$, with $1\leq k<m$, and to two tetrahedra of the type $T^e_m$.
\end{enumerate}
\end{lemma}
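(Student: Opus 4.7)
The strategy is to exploit vertex transitivity (Remark~\ref{cl is vt}) to fix an arbitrary vertex, say $v_1$, and count tetrahedra incident to it. By Corollary~\ref{edgecor}, each tetrahedron in $\partial C(n,4)$ corresponds bijectively to an unordered pair of non-local edges in the distinguished cycle $\mathcal{C}$, and such a tetrahedron contains $v_1$ precisely when one of the two edges does. The edges of $\mathcal{C}$ incident to $v_1$ are $e^- = v_nv_1$ and $e^+ = v_1v_2$; since these two edges share $v_1$ they are local, so any valid pair contains exactly one of $e^\pm$. This partitions the tetrahedra at $v_1$ into two families, each parametrized by the $n-3$ edges non-local to the chosen $e^\pm$, for a total of $2(n-3)$ tetrahedra.

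The next step is to identify the type of each such tetrahedron via the classification in the proof of Lemma~\ref{tetralemma}. Given a pair $\{e_i, e_j\}$, let $|A_1|$ and $|A_2| = n-2-|A_1|$ denote the two arc-edge counts between the two edges on $\mathcal{C}$. The proof of Lemma~\ref{tetralemma} shows that the type is determined by $p := \min(|A_1|, |A_2|)$: one obtains type $T_p$ when $p < m$, and type $T_m^o$ or $T_m^e$ (for $n=2m+3$ or $n=2m+2$ respectively) when $p = m$.

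It then remains to tabulate, for each of $e^-$ and $e^+$, how many of the $n-3$ non-local partners yield each value of $p$. Fixing $e^-$, the partner edges $e_j$ for $j \in \{1, \ldots, n-3\}$ give arc lengths $(j, n-2-j)$, so $p = \min(j, n-2-j)$. For $n = 2m+3$, the sum $n-2 = 2m+1$ is odd, hence $|A_1| \ne |A_2|$ always; each value $p \in \{1, \ldots, m-1\}$ is attained by exactly two choices of $j$ (namely $j=p$ and $j = 2m+1-p$), and $p = m$ is attained by exactly two choices (namely $j = m$ and $j = m+1$). For $n = 2m+2$, the sum $n - 2 = 2m$ is even, so $p = m$ is attained only by the unique symmetric configuration $|A_1| = |A_2| = m$, giving exactly one $j$; each $p < m$ is still attained twice. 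The analysis for $e^+$ is identical after a shift of indices, and since $e^-$ and $e^+$ are themselves local there is no double-counting between the two families. Summing yields $4$ tetrahedra of each type $T_k$ for $1 \le k < m$ in both cases, together with $4$ of type $T_m^o$ in the odd case and $2$ of type $T_m^e$ in the even case, which is the claim.

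The main obstacle is purely combinatorial bookkeeping, concentrated at the extremal value $p = m$: for odd $n$, the two arc lengths cannot be equal and the extremal value splits into two ``adjacent'' choices of partner edge, while for even $n$ the unique symmetric configuration $|A_1| = |A_2| = m$ produces only one tetrahedron per side. Once this parity distinction is pinned down, the rest is arithmetic and can be cross-checked against the global incidence count, which must equal $4|T|/n$, giving $4m = 2(n-3)$ for $n = 2m+3$ and $4m - 2 = 2(n-3)$ for $n = 2m+2$ from Lemma~\ref{com}.
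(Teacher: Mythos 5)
Your proof is correct, and it takes a genuinely different route from the paper's. The paper argues in two stages: it first exhibits four explicit tetrahedra of each type containing a fixed vertex $v_0$ (the vertex sets $v_0,v_1,v_{k+1},v_{k+2}$ and $v_{n-1},v_0,v_k,v_{k+1}$, together with their images under a rotation of the cycle), checks pairwise distinctness, and thereby gets the lower bound ``at least four of each type''; it then invokes vertex transitivity and the global incidence count $v_{val}=4|T|/|V|$ from Lemma~\ref{com} to upgrade this to equality. You instead enumerate \emph{all} tetrahedra at a vertex directly: the correspondence of Corollary~\ref{edgecor} between tetrahedra and unordered pairs of non-local edges of $\mathcal{C}$ identifies the tetrahedra at $v_1$ with pairs containing exactly one of the two cycle edges at $v_1$, giving $2(n-3)$ of them, and the type of each is read off from $p=\min(|A_1|,|A_2|)$ with $|A_1|+|A_2|=n-2$; the parity of $n-2$ then cleanly explains why the extremal type contributes four tetrahedra when $n$ is odd but only two when $n$ is even. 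Your argument is more self-contained (no ``at least/exactly'' squeeze, and the global count becomes a consistency check rather than a needed input), and it makes the asymmetry of the $T_m^e$ count transparent, whereas the paper's even case is handled somewhat tersely by analogy. The only points you are implicitly relying on are that the edge-pair correspondence is a bijection (true for $n\geq 5$, and already used implicitly in the paper's count of $|T|$ in Lemma~\ref{com}) and that the type of a tetrahedron is determined by $\min(|A_1|,|A_2|)$, which does follow from the case analysis in the proof of Lemma~\ref{tetralemma} since $\min\bigl(D_{(i+1)j},D_{i(j+1)}\bigr)=\min(|A_1|,|A_2|)$; both are sound.
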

\begin{proof}
We begin with the case that $n=2m+3$, and we choose a vertex $v_0 \in V$. It is now possible to construct a tetrahedron of type $T_k$ with vertices $v_0$, $v_1$, $v_{k+1}$, $v_{k+2}$.  (Notice that this construction is valid for all $1 \leq k \leq m$; in particular, for type $T_m^o$.)   Let this specific tetrahedron be labeled $t_{k1}$.

Define a map $f:V\rightarrow V$ by
\begin{equation}
\label{def of fn}
f(v_i) = v_{i+h (\mbox{mod } 2m+3)},
\end{equation}
where h solves the equation $0 \equiv k+1+h (\mbox{mod }  2m+3)$.
Note that under this map, $v_0 \to v_h$, $v_1\to v_{h+1}$, $v_{k+1}\to v_0$, and $v_{k+2}\to v_1$. Since this map preserves $D_{ij}$ for any two vertices $v_i$ and $v_j$, this is still a type $T_k$ tetrahedron. Also note that this tetrahedron is distinct from $t_{k1}$; i.e. that the new set of vertices $v_h, v_{h+1}, v_0, v_1$ is distinct from our original set of vertices.   If not, then we would have $h \equiv k+1 (\mbox{mod  }2m+3)$, which would imply that $h\equiv 0 (\mbox{mod }2m+3)$, which contradicts our choice of $h$.  Let this tetrahedron be labeled $t_{k2}$.

Define the tetrahedron $t_{k3}$ by vertices $v_{2m+2}, v_0, v_k, v_{k+1}$. This is distinct from $t_{k1}$ and $t_{k2}$ and is clearly a tetrahedron of type $T_k$.  Now apply $f$ to $t_{k3}$. Under this map, $v_{2m+2}\to v_{h-1}$, $v_0\to v_h$, $v_{k+1}\to v_0$, and $v_{k}\to v_{2m+2}$. Label this tetrahedron $t_{k4}$.

Again, we note that $t_{k4}$ is distinct from $t_{k3}$.  If not, then we would have $h-1 \equiv k (\mbox{mod }2m+3)$, which gives a similar contradiction as above.  Also note that $t_{k4}$ is distinct from $t_{k1}$ since $k+1 \neq k+2$, and it is distinct from $t_{k2}$ since $h \neq h+1$.  As our choice of $v_0$ was arbitrary, every vertex is part of at least four distinct tetrahedra of type $T_k$, with $1\leq k \leq m$.

We claim that in fact each vertex is contained in exactly four distinct tetrahedra of type $T_k$.  Let $v_{val}$ denote the number of tetrahedra incident to vertex $v$.  From above, we know that $v_{val}\geq 4m$.  In fact, since our triangulation is vertex transitive, $v_{val}=\frac{4T}{V}$, where $T$ is the total number of tetrahedra in the triangulation and $V$ is the total number of vertices.   Thus $v_{val}=\frac{4(2m^3+3m)}{2m+3}=4m$, so each vertex is contained in exactly four distinct tetrahedra of type $T_k$ for $1 \leq k \leq m$ (including type $T_k^o$) as required.

\bigskip

The proof in the case that $n=2m+2$ follows in much the same way for type $T_k$ tetrahedra when  $1\leq k < m$.  One can simply change the definition of the function $f$ in Eq.~\ref{def of fn} to be $f(v_i)=v_{i+h (\mbox{mod } 2m+2)}.$  However, to construct the two tetrahedra of type $T_m^e$, we choose the vertex sets $v_0, v_1, v_{m+1},v_{m+2}$ and $v_{2m+2}, v_0, v_m, v_{m+1}$.  Using a similar counting argument as above, we see that each vertex lies in four type $T_k$ tetrahedra for $1\leq k <m$ and in two type $T^e_m$ tetrahedra.
\end{proof}

\section{Cyclic Length Metrics have Constant Scalar Curvature}
In this section, we will use the results from the previous section to show that cyclic length metrics on $\partial C(n,4)$ have constant scalar curvature; namely, that they solve both Eq.~\ref{Lcsc eqn} and Eq.~\ref{Vcsc eqn}.

We first show that both the edge and vertex curvatures (Eqs.~\ref{edge curvature} and \ref{vertex curvature}) are constant for cyclic length metrics on $\partial C(n,4)$.

\begin{lemma}
\label{edgecurve}
Let $\ell$ be a cyclic length metric on $\partial C(n,4)$.  For all $v\in V$, $K_v$ is constant.
\end{lemma}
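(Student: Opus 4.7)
The plan is to reduce this to a symmetry statement: every cyclic length metric is invariant under the full dihedral group $D_n$, and the vertex curvature is an intrinsic quantity depending only on lengths and dihedral angles in the star of $v$, so transitivity of $D_n$ on $V$ forces $K_v$ to be the same at each vertex.

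Concretely, I would first recall that $\partial C(n,4)$ is $D_n$-invariant as a simplicial complex, with $D_n$ acting by cyclic rotations and reflections of the index set $\{1,\ldots,n\}$ along $\mathcal{C}$. The crucial point is that this action preserves the cyclic distance $D_{ij}$ between any two vertices. Since a cyclic length metric assigns $\ell_{e_{ij}}$ as a function of $D_{ij}$ alone, the $D_n$-action preserves $\ell$ edgewise. Thus every $\phi \in D_n$ is a simplicial automorphism that maps each edge to an edge of the same length, and so carries each tetrahedron at $v$ isometrically to a tetrahedron at $\phi(v)$. This is exactly the content of Remark~4.3, made concrete.

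Next I would observe that the defining formula
\[
K_v \;=\; \tfrac{1}{2}\sum_{e>v} K_e \;=\; \tfrac{1}{2}\sum_{e>v}\bigl(2\pi - \sum_{t>e}\beta_{e<t}\bigr)\,\ell_e
\]
is built entirely from edge lengths incident to $v$ and dihedral angles of tetrahedra containing those edges. Since dihedral angles are determined by edge lengths (via the cosine and spherical cosine laws, as recalled in \S2.1), any length-preserving simplicial bijection between the star of $v$ and the star of $\phi(v)$ carries each summand in $K_v$ to the corresponding summand in $K_{\phi(v)}$. Therefore $K_v = K_{\phi(v)}$ for every $\phi \in D_n$, and transitivity on $V$ yields $K_v = K_{v'}$ for all $v, v' \in V$.

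I do not anticipate a real obstacle here; the argument is essentially a one-line appeal to vertex transitivity combined with the observation that $K_v$ is defined locally from intrinsic data. If the authors prefer a combinatorial check rather than a symmetry argument, the same conclusion can be extracted directly from Lemma~\ref{fourdis}: every vertex is incident to the same number of tetrahedra of each type $T_k$ (and $T_m^o$ or $T_m^e$), and the edges at $v$ of a given length type lie in the same tetrahedron types with the same dihedral angles by Proposition~\ref{eq length eq dih}, so the sum $\tfrac{1}{2}\sum_{e>v}K_e$ is literally the same sum at every vertex. Either route produces the lemma.
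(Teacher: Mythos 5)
Your symmetry argument is correct, but it is a genuinely different route from the one the paper takes. The paper's proof is a hands-on combinatorial enumeration: for a representative edge $e_{ij}$ in each length class $L_p$, it uses Corollary~\ref{edgecor} to list the tetrahedra containing $e_{ij}$ and identify their types (splitting into the cases $p\neq 1,2,m+1$, then $p=m+1$, $p=2$, and $p=1$, the last of which requires noting that an edge of $\mathcal{C}$ lies in $2m+1$ tetrahedra rather than four), and then invokes Proposition~\ref{eq length eq dih} together with the congruence of all type-$T_k$ tetrahedra to conclude that $K_e$ depends only on $\ell_e$; vertex transitivity of the metric then finishes the job exactly as in your last paragraph. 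Your main argument instead pushes everything through the $D_n$-action: since $D_n$ preserves the complex (a fact the paper states in \S 3, citing K\"uhnel--Lassman, and which also follows from Gale's evenness condition since the rotation and reflection of $\mathcal{C}$ preserve the ``two non-local edges of $\mathcal{C}$'' characterization of tetrahedra) and preserves $D_{ij}$, it is a length-preserving simplicial automorphism, and $K_v$ is computed entirely from the metric star of $v$, so transitivity on $V$ gives the result in one stroke. One small point to make explicit if you write this up: the paper's definition of automorphism only requires edges to map to edges, which is vacuous for a neighborly complex, so you should say that the $D_n$-action preserves tetrahedra (hence stars), not merely edges. What your route buys is brevity and immunity to miscounting, and it does not need Lemma~\ref{tetralemma} or Proposition~\ref{eq length eq dih} at all; what the paper's route buys is explicit edge-level data (which tetrahedron types, and how many, surround each edge class), and the intermediate statement that $K_e$ is a function of $\ell_e$ alone, which is finer than the lemma as stated. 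Your closing sketch of a combinatorial alternative via Lemma~\ref{fourdis} is closest in spirit to the paper but is not quite what it does: the paper counts tetrahedra around each \emph{edge} directly rather than around each vertex.
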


\begin{proof}
We will provide the proof for $n=2m+3$.  The case of $n=2m+2$ follows similarly.

First we will show that all edges with equal length have equal edge curvature, $K_e$. We will denote the $m+1$ possible edge lengths as $\{L_1, L_2, \ldots, L_{m+1}\}$ as in the proof of Lemma \ref{tetralemma}.  Suppose $i<j$.  Choose an arbitrary $e_{ij} \in E$ with length $L_p$.  Recall this corresponds to the case that  $D_{ij}=p$.  We will show that every edge with length $L_p$ is in the same number of certain types of tetrahedra.  By Proposition~\ref{eq length eq dih}, this will imply that the dihedral angles, and thus the edge curvatures, are the same.

First, let $p\neq1, 2,  m+1$.  The tetrahedra containing edge $e_{ij}$ clearly contain the vertices $v_i$ and $v_j$.  By Corollary~\ref{edgecor}, the other two vertices can be combinations of $v_{i+1}$ or $v_{i-1}$ and $v_{j+1}$ or $v_{j-1}$.   Simply by observation, one can determine what type of tetrahedra corresponds to each vertex choice.  For example, both the vertex sets $v_{i-1}, v_{i}, v_{j-1}, v_{j}$ and $v_{i+1}, v_{i}, v_{j+1}, v_{j}$ yield a type $T_{p-1}$ tetrahedron.  The vertex set $v_{i-1}, v_{i}, v_{j+1}, v_{j}$ yields a type $T_p$ tetrahedron, while the set $v_{i+1}, v_{i}, v_{j-1}, v_{j}$ gives a type $T_{p-2}$ tetrahedron.

Secondly, let $p=m+1$.   Again, we simply list the tetrahedra containing $e_{ij}$ and note their types.  The vertex set $v_{i+1}, v_{i}, v_{j-1}, v_{j}$ generates  a type $T_{m-1}$ tetrahedron.   However, all of the sets $v_{i-1}, v_{i}, v_{j-1}, v_{j}$; $v_{i+1}, v_{i}, v_{j+1}, v_{j}$; and $v_{i-1}, v_{i}, v_{j+1}, v_{j}$ yield type $T_m^o$ tetrahedra.

For the third case, let $p=2$.  In this case, notice that only one of $v_{i+1}$ and $v_{j-1}$ can be vertices of a tetrahedron, unless $i=1$ and $j=2m+2$.  For the former case, the vertex sets $v_{i-1}, v_{i}, v_{j-1}, v_{j}$ and $v_{i+1}, v_{i}, v_{j+1}, v_{j}$ yield tetrahedra of type $T_1$, whereas $v_{i-1}, v_{i}, v_{j+1}, v_{j}$ gives a type $T_2$ tetrahedron.  In the latter case, the analysis is the same, but notice that only one of $v_{i-1}$ and $v_{j+1}$ can be vertices.

Finally, let $p=1$.   Since there are $2m+3$ edges in $\mathcal{C}$, Corollary~\ref{edgecor} implies that there are $2m$ tetrahedra containing both $e_{ij}$ and an edge that is not local to $e_{ij}$ in $\mathcal{C}$.  Notice also that the tetrahedron with vertices $v_{i-1}, v_i, v_j, v_{j+1}$ contains $e_{ij}$ and is not one of the $2m$ tetrahedra previously mentioned.  Thus, in this case, $e_{ij}$ is contained in $2m+1$ tetrahedra.  The vertex set  $v_{i-1}, v_i, v_j, v_{j+1}$ yields a tetrahedron of type $T_1$.   Additionally, one can see that $e_{ij}$ is in two of each type of $T_k$ for $1\leq k <m$ and two of type $T_m^o$.

Therefore the edge curvature of any given edge depends only on the length of that edge, so edges with equal length have equal edge curvature.  By Remark~\ref{cl is vt}, the metric is vertex transitive,  so the sets of edge lengths of the edges incident to each vertex are  the same. Together, these facts imply that for all  $v\in V$, $K_v=\frac{1}{2}\sum_{e>v} K_e$ is constant.
\end{proof}

\begin{corollary}
\label{vertexcurve}
Let $\ell$ be a cyclic length metric on $\partial C(n,4)$.  Then $\sum_e{K_e} = nK_v$.
\end{corollary}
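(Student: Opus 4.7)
The corollary is an immediate bookkeeping consequence of the definition of vertex curvature together with the preceding lemma, so the plan is essentially a one-line double count.

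First I would start from the definition $K_v = \tfrac{1}{2}\sum_{e>v} K_e$ and sum over all $v \in V$, obtaining
\begin{equation*}
\sum_{v\in V} K_v \;=\; \tfrac{1}{2}\sum_{v\in V}\sum_{e>v} K_e.
\end{equation*}
Next I would swap the order of summation and observe that each edge $e\in E$ contributes its curvature $K_e$ to exactly the two vertices it contains, so the double sum collapses to $2\sum_{e\in E} K_e$, and the factor of $\tfrac{1}{2}$ cancels to yield $\sum_{v\in V} K_v = \sum_{e\in E} K_e$.

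Finally I would invoke Lemma \ref{edgecurve}, which gives that $K_v$ takes a common value across all $n$ vertices of $\partial C(n,4)$ under a cyclic length metric; thus $\sum_{v\in V} K_v = n K_v$, and combining this with the previous identity gives $\sum_{e\in E} K_e = n K_v$, as required. There is no real obstacle here: the combinatorial content (that $K_v$ is constant) has already been established in the preceding lemma, and what remains is only the standard incidence double count between vertices and edges.
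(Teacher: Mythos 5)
Your proof is correct and follows the same route the paper intends: the paper simply states that the corollary ``follows directly from Lemma \ref{edgecurve},'' and your vertex--edge incidence double count together with the constancy of $K_v$ is exactly the omitted computation.
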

\begin{proof}
This result follows directly from Lemma \ref{edgecurve}. 
\end{proof}

\begin{lemma}
\label{lvlem}
Let $\ell$ be a cyclic length metric on $\partial C(n,4)$.  For all $v \in V$, $L_v=\frac{1}{n}\mathcal{L}$.\end{lemma}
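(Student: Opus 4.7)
The proof plan is essentially a counting/symmetry argument that uses vertex transitivity of the cyclic length metric plus the identity already noted in Remark~2.7.

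First, I would recall that by Remark~\ref{cl is vt}, a cyclic length metric $\ell$ on $\partial C(n,4)$ is vertex transitive; that is, the automorphism group of $(\partial C(n,4), \ell)$ acts transitively on $V$ and each automorphism sends an edge of length $a$ to another edge of length $a$. Consequently, for any two vertices $v, v' \in V$, there is an automorphism $f$ with $f(v) = v'$, and $f$ restricts to a length-preserving bijection between the edges incident to $v$ and those incident to $v'$. This immediately gives
\begin{equation*}
L_v \;=\; \tfrac{1}{2}\sum_{e>v}\ell_e \;=\; \tfrac{1}{2}\sum_{e>v'}\ell_{f(e)} \;=\; L_{v'}.
\end{equation*}
So the quantity $L_v$ is the same at every vertex.

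Next, I would invoke the identity $\sum_{v \in V} L_v = \mathcal{L}$ noted in Remark~2.7 (and easily reverified: each edge $e = (v,v')$ contributes $\tfrac{1}{2}\ell_e$ to $L_v$ and $\tfrac{1}{2}\ell_e$ to $L_{v'}$, so summing over vertices double-counts each edge contribution, giving $\sum_v L_v = \sum_e \ell_e = \mathcal{L}$). Combining with the previous step, since all $n$ terms of the sum are equal, each satisfies $L_v = \mathcal{L}/n$, as claimed.

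There is essentially no obstacle here: the lemma is a formal consequence of vertex transitivity combined with the double-counting identity. I expect the entire written proof to be a few lines referencing Remark~\ref{cl is vt} and Remark~2.7.
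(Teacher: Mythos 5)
Your proposal is correct and matches the paper's proof essentially verbatim: the paper likewise observes that vertex transitivity (Remark~\ref{cl is vt}) makes $L_v$ constant and then divides the identity $\sum_{v\in V}L_v=\mathcal{L}$ by $n$. Your version just spells out the two steps in slightly more detail than the paper does.
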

\begin{proof}
Recall $L_v=\frac{1}{2}\sum_{e>v}\ell_e$, as in Definition~\ref{csc def}.  Since a cyclic length metric is vertex transitive, $L_v$ is clearly constant for all $v \in V$.    Additionally, since $\sum_{v \in V}L_v=\mathcal{L}$, we see that $L_v=\frac{1}{n}\mathcal{L}$ as required.
\end{proof}

\begin{lemma}
\label{vvlem}
Let $\ell$ be a cyclic length metric on $\partial C(n,4)$.  For all $v\in V$, $V_v=\frac{3}{n}\mathcal{V}$.
\end{lemma}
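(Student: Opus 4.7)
The plan is to follow exactly the template of Lemma \ref{lvlem}, exploiting vertex transitivity. The quantity $V_v = \frac{1}{3}\sum_{t>f>v} h_{f<t} A_f$ is defined purely in terms of the faces and tetrahedra incident to $v$, together with their geometric data (face areas $A_f$ and signed circumcenter distances $h_{f<t}$). These data are functions of the edge lengths of the simplices incident to $v$, and therefore any automorphism of $(\partial C(n,4),\ell)$ that preserves the metric will carry the local contribution at $v$ to the local contribution at $\phi(v)$.

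First I would invoke Remark \ref{cl is vt}, which tells us that a cyclic length metric is vertex transitive: for any two vertices $v, v' \in V$, there exists an automorphism $\phi$ of the simplicial complex with $\phi(v) = v'$ and such that $\ell_{\phi(e)} = \ell_e$ for every edge $e$. Such a $\phi$ sends each face $f$ containing $v$ to a face $\phi(f)$ containing $v'$ with the same edge lengths (hence the same area $A_{\phi(f)} = A_f$), and sends each tetrahedron $t$ containing $f$ to a tetrahedron $\phi(t)$ containing $\phi(f)$ with congruent geometry, so that the signed distance $h_{\phi(f)<\phi(t)}$ equals $h_{f<t}$. Summing, we obtain $V_{v'} = V_v$, so $V_v$ is independent of $v$.

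Second, I would apply the remark immediately following Definition \ref{csc def}, which states $\sum_{v \in V} V_v = 3\mathcal{V}$. Since the triangulation has $n$ vertices and each $V_v$ takes the common value, we conclude
\begin{equation*}
V_v = \frac{1}{n}\sum_{v' \in V} V_{v'} = \frac{3\mathcal{V}}{n},
\end{equation*}
as required.

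There is really no obstacle here beyond confirming that vertex transitivity of the abstract simplicial complex together with a vertex transitive metric genuinely yields an isometry carrying the local star of one vertex onto that of another; this is built into the definition of vertex transitive metric given in Section 2. All of the genuinely combinatorial work required to verify vertex transitivity of cyclic length metrics has already been assembled in Section 4, so this lemma is essentially immediate once Remark \ref{cl is vt} and the summation identity are combined.
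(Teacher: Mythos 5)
Your proof is correct, but it is not the route the paper takes. You argue globally: vertex transitivity of the metric (Remark~\ref{cl is vt}) supplies, for each pair $v,v'$, a simplicial automorphism preserving $\ell$, hence a bijection of the pairs $(f,t)$ with $t>f>v$ onto those with $t>f>v'$ that preserves $A_f$ and $h_{f<t}$; combined with $\sum_{v}V_v=3\mathcal{V}$ this gives the claim. This is exactly the template of the paper's proof of Lemma~\ref{lvlem}, and it works equally well here. The paper instead proves Lemma~\ref{vvlem} by a local combinatorial count: it invokes Lemma~\ref{fourdis} to decompose $V_{v}$ into partial sums over the tetrahedra of each type $T_k$ (four of each type per vertex when $n=2m+3$, with the adjustment for $T_m^e$ when $n=2m+2$), and then checks that among the four type-$T_k$ tetrahedra containing a given vertex, that vertex occupies each of the four combinatorial positions of Figure~\ref{T_p tetra} exactly once, so the partial sums agree vertex by vertex. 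Your argument is shorter and avoids that bookkeeping; what the paper's version buys is that it never has to lean on the automorphisms doing more than permuting vertices, since it verifies the matching of local geometric data by hand. One small caution for your route: the paper's Definition of automorphism only requires edges to map to edges, which is vacuous on a neighborly complex; your argument needs automorphisms that carry faces to faces and tetrahedra to tetrahedra while preserving $\ell$. These do exist --- the dihedral action of $D_n$ cited from \cite{KL} preserves $D_{ij}$ and hence any cyclic length metric --- but you should say so explicitly rather than attributing it to the bare definition of a vertex transitive metric.
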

\begin{proof}
We begin the proof by letting $n=2m+3$.  Recall the definition of $V_v$ as given in Definition~\ref{csc def}.  For any vertex $v_i \in V$, $V_{v_i} = \frac{1}{3}\sum_{t>v}\sum_{(j,k,l)<t}{h_{ijk,l}A_{ijk}}$. Note that this sum is over all tetrahedra containing $v_i$.  By Lemma~\ref{fourdis}.1, each vertex is contained in exactly $4m$ tetrahedra.  More precisely, each vertex is contained in four of each type $T_p$ tetrahedra for $1\leq p< m$ and four tetrahedra of type $T_m^o$. Thus we can split up $V_{v_i}$ into $m$ partial sums, one  for each type of tetrahedron. Note that every tetrahedron of type $T_p$ is geometrically identical, so the sum $\sum_{(j,k,l)<t}{h_{ijk,l}A_{ijk}}$ only varies in regards to the choice of vertices.  

In the tetrahedra of type $T_p$, a given vertex $v_i$  has each of the sets of possible incident edges appear one time.  In other words, the vertex $v_i$ ``plays the role'' of each of the vertices $i, j, k, l$ in Figure~\ref{T_p tetra} or Figure~\ref{T_m^o tetra}.  There are four possible sets of incident edges.  Thus for all $v_i \in V$, the partial sums of $V_{v_i}$ with respect to the tetrahedra of type $T_k$ are equal.  Since this holds for every type $T_k$ for $1\leq k \leq m,$ the total sums $V_{v_i}$ are equal for all $v_i\in V$. Thus,  $V_v$ is constant, so ${\sum_v V_v} = nV_v$ for all $v\in V$.  Recall that $\sum_{v \in V}V_v=3\mathcal{V}$, so $V_v=\frac{3}{n}\mathcal{V}$ as required.. 

The proof for $n=2m+2$ follows in the same way.  The only difference is that a vertex $v_i$ is contained in only two type $T_m^e$ tetrahedra.  However, in this case, notice that all of the vertices have the same sets of incident edge lengths (see Figure~\ref{T_m^e tetra}), so again the sum $V_v$ equal to the same constant for all $v$.
\end{proof}

We now prove the main theorem of this section; namely that cyclic length metrics on $\partial C(n,4)$ have constant scalar curvature.
\begin{theorem}
Let $\ell$ be a cyclic length metric on $\partial C(n,4)$.  This metric is both $\mathcal{LCSC}$ and $\mathcal{VCSC}$; i.e. it satisfies both Eq.~\ref{Lcsc eqn} and Eq.~\ref{Vcsc eqn}.
\end{theorem}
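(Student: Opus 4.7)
The plan is to observe that the three lemmas immediately preceding the theorem have already done essentially all of the work; the proof amounts to assembling them and computing the two Lagrange-multiplier constants $\lambda_{\mathcal{L}}$ and $\lambda_{\mathcal{V}}$.

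First I would rewrite $\mathcal{EHR}$ using the vertex transitivity result. By Lemma \ref{edgecurve}, $K_v$ takes a common value $K$ at every vertex, so $\mathcal{EHR}(M,\mathcal{T},\ell) = \sum_{v\in V} K_v = nK$. This is the single structural identity that drives both conclusions.

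Next I would verify the $\mathcal{LCSC}$ equation. By Lemma \ref{lvlem}, $L_v = \mathcal{L}/n$ for every $v$. Therefore
\begin{equation*}
\lambda_{\mathcal{L}} L_v = \frac{\mathcal{EHR}}{\mathcal{L}} \cdot \frac{\mathcal{L}}{n} = \frac{\mathcal{EHR}}{n} = K = K_v,
\end{equation*}
which is exactly Eq.~\ref{Lcsc eqn}. Entirely analogously, Lemma \ref{vvlem} gives $V_v = 3\mathcal{V}/n$, and thus
\begin{equation*}
\lambda_{\mathcal{V}} V_v = \frac{\mathcal{EHR}}{3\mathcal{V}} \cdot \frac{3\mathcal{V}}{n} = \frac{\mathcal{EHR}}{n} = K = K_v,
\end{equation*}
which is Eq.~\ref{Vcsc eqn}.

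Since there are essentially no obstacles left at this stage, there is no hard step; all of the combinatorial work (counting tetrahedra of each type incident to a vertex, matching opposite dihedral angles via Proposition \ref{eq length eq dih}, and showing $V_v$ depends only on the multiset of incident edge lengths) was already carried out in Section 4 and in the earlier lemmas of Section 5. The only thing to be careful about is to treat the two parities $n = 2m+3$ and $n = 2m+2$ uniformly, but since Lemmas \ref{edgecurve}, \ref{lvlem}, and \ref{vvlem} are each stated for arbitrary $n$, the computation above applies verbatim in both cases and no case split is needed in the final step.
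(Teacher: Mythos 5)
Your proposal is correct and follows essentially the same route as the paper: both arguments combine the constancy of $K_v$ (Lemma~\ref{edgecurve}, equivalently Corollary~\ref{vertexcurve}) with $L_v=\mathcal{L}/n$ and $V_v=3\mathcal{V}/n$ to identify $\lambda_{\mathcal{L}}L_v$ and $\lambda_{\mathcal{V}}V_v$ with the common value $K_v=\mathcal{EHR}/n$. Your observation that no case split on the parity of $n$ is needed at this final stage is also consistent with the paper's treatment.
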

\begin{proof}
We will begin by showing this metric is $\mathcal{LCSC}$. By Corollary~\ref{vertexcurve} and Lemma~\ref{lvlem}:
\[
\lambda_L = \frac{\sum_e K_e}{\sum_e l_e} = \frac {n K_v}{\frac{n}L_v} =\frac { K_v}{L_v} \]
for all $v\in V$. Thus, for all $v\in V$, $K_v = \lambda_\mathcal{L}L_v$.  Therefore $\ell$ is $\mathcal{LCSC}$.

We will now show that this metric is $\mathcal{VCSC}$.  Applying Corollary~\ref{vertexcurve} and Lemma~\ref{vvlem}  yields
\[\lambda_\mathcal{V} = \frac{\sum_e K_e}{3\mathcal{V}} = \frac{nK_v}{nV_v} = \frac{K_v}{V_v}\]
for all $v\in V$. Thus, for all $v\in V$, $K_v = \lambda_\mathcal{V}V_v$. Therefore, $\ell$ is $\mathcal{VCSC}$.\end{proof}

\section{Open Questions}
As mentioned in Definition~\ref{csc def}, $\mathcal{LCSC}$ and $\mathcal{VCSC}$ metrics occur at critical points of the $\mathcal{LEHR}$ and $\mathcal{VEHR}$ functionals within a conformal class.  We have shown that cyclic length metrics on $\partial C(n,4)$ are both $\mathcal{LCSC}$ and $\mathcal{VCSC}$, and we would like to know with what frequency these metrics occur; i.e. in which conformal classes  these cyclic length metrics appear.  One may hope that they occur in every conformal class.  However  the next proposition shows that this is not the case.

\begin{proposition}
\label{conform}
On $\partial C(n,4)$, there exists a conformal class that does not admit a cyclic length metric.
\end{proposition}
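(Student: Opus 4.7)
The plan is to carry out a linear-algebraic dimension count in logarithmic edge-length coordinates, then produce a genuine piecewise flat metric outside the resulting subspace by perturbing a regular cyclic length metric.

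I would begin by recasting everything linearly. Setting $u_e := \log \ell_e$ for $e=(v,v')$, the conformal variation formula \eqref{pb conf} becomes $u_e = \log L_e + \tfrac{1}{2}(f_v + f_{v'})$. Thus the conformal class of $L$ is a coset in $\mathbb{R}^{|E|}$ of the linear subspace
\[
\mathcal{F} \;:=\; \bigl\{\, \bigl(\tfrac{1}{2}(f_v + f_{v'})\bigr)_{e\in E} \,:\, f\in\mathbb{R}^{n} \,\bigr\},
\]
while the cyclic length metrics correspond exactly to vectors in the subspace $\mathcal{C} := \{\, u\in\mathbb{R}^{|E|} : u_e \text{ depends only on } D_e\,\}$. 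The conformal class of $L$ admits a cyclic length representative if and only if $\log L \in \mathcal{C} + \mathcal{F}$.

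The heart of the argument is a dimension count. Since the $1$-skeleton of $\partial C(n,4)$ is the complete graph $K_n$ and contains a triangle, any $f$ with $f_v+f_{v'}=0$ along every edge must vanish on that triangle and hence, by connectivity, everywhere; so the linear map defining $\mathcal{F}$ is injective and $\dim\mathcal{F}=n$. By Lemma \ref{com} the distance $D_e$ takes $\lfloor n/2\rfloor$ values, giving $\dim\mathcal{C}=\lfloor n/2\rfloor$. Therefore
\[
\dim(\mathcal{C}+\mathcal{F}) \;\leq\; n + \lfloor n/2\rfloor \;<\; \binom{n}{2} \;=\; \dim\mathbb{R}^{|E|},
\]
the strict inequality holding for every $n\geq 5$. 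Thus $\mathcal{C}+\mathcal{F}$ is a proper linear subspace of the log-edge-length space.

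To upgrade this deficit to an honest piecewise flat metric, I would perturb a regular cyclic length metric. Taking $L^0_e\equiv 1$ realizes every tetrahedron of $\partial C(n,4)$ as a regular Euclidean tetrahedron, so $L^0$ is a valid metric with strictly positive Cayley-Menger determinants. Choosing $\delta\in\mathbb{R}^{|E|}$ with a nonzero component transverse to $\mathcal{C}+\mathcal{F}$ and setting $L^\epsilon := L^0 + \epsilon\delta$, openness of the Cayley-Menger condition ensures $L^\epsilon$ remains a valid metric for all sufficiently small $\epsilon>0$, and $\log L^\epsilon$ retains a nonzero component transverse to $\mathcal{C}+\mathcal{F}$, so its conformal class admits no cyclic length metric.

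The main obstacle I anticipate is essentially bookkeeping: verifying $\dim\mathcal{F}=n$ and $\dim\mathcal{C}=\lfloor n/2\rfloor$ and then matching $\lfloor n/2\rfloor = m+1$ with Lemma \ref{com}. A fully explicit alternative on the pentachoron $\partial C(5,4)$ would be to take $L_{12}=1+\epsilon$ and $L_e=1$ otherwise: the $D=1$ equations pin down $f$ up to an additive shift (yielding $f_1=f_2=f_4=0$, $f_3=f_5=2\log(1+\epsilon)$), after which the $D=2$ equations force the inconsistent chain $\tilde{L}_{13}=1+\epsilon$, $\tilde{L}_{24}=1$, $\tilde{L}_{35}=(1+\epsilon)^2$.
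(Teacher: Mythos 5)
Your argument is correct, and it takes a genuinely different route from the paper. The paper's proof is a direct construction: it takes the conformal class with $L_{ij}=1$ for every edge except $L_{12}=a\neq 1$ and chases the cyclic-length equalities $\ell_{01}=\ell_{12}=\ell_{23}=\ell_{34}$ and $\ell_{02}=\ell_{13}=\ell_{24}$ to the contradiction $a e^{f_1}=e^{f_1}$ --- essentially the same computation as your explicit pentachoron fallback, done for general $n$. Your primary argument is instead a transversality/dimension count in log coordinates: the conformal directions form an $n$-dimensional subspace (injectivity follows from the odd cycles in $K_n$, as you note --- this is the one place the argument would fail on a bipartite $1$-skeleton), the cyclic-length locus lies in a $\lfloor n/2\rfloor$-dimensional subspace by Lemma~\ref{com}, and $n+\lfloor n/2\rfloor<\binom{n}{2}$ for $n\geq 5$, after which openness of the Cayley--Menger condition around the all-ones metric produces a genuine metric outside $\mathcal{C}+\mathcal{F}$. (It is slightly cleaner to perturb multiplicatively, $L^{\epsilon}_e=\exp(\epsilon\delta_e)$, so that $\log L^{\epsilon}=\epsilon\delta$ exactly; with your additive perturbation one should note that the transverse component of $\log L^{\epsilon}$ is $\epsilon P\delta+O(\epsilon^2)\neq 0$ for small $\epsilon$.) What each buys: the paper's computation is elementary and exhibits a concrete bad conformal class; your count is non-constructive but strictly more informative, showing that the conformal classes admitting a cyclic length metric form a subset of codimension at least $\binom{n}{2}-n-\lfloor n/2\rfloor$ in the space of all conformal classes, i.e.\ that \emph{generic} conformal classes fail, which speaks directly to the ``with what frequency'' question raised in \S 6. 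One cosmetic caution: the paper already uses $\mathcal{C}$ for the distinguished cycle, so your subspace of cyclic-length log-metrics should be given a different symbol.
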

\begin{proof}
Recall from Definition~\ref{conf class def} that a conformal class is equivalent to a choice of $L_{ij}$ for all $e_{ij}\in E$.  Then the edge lengths within this conformal class are given by $\ell_{ij}=\exp\left[\frac{1}{2}(f_i+f_{j})\right]L_{ij}$ for $e_{ij}$.

We begin by supposing that all conformal classes on $\partial C(n,4)$ admit a cyclic length metric, $\ell$.  Choose a vertex $v_0$ and label the remaining vertices $v_1, v_2, \ldots, v_{n-1}$ proceeding clockwise around $\mathcal{C}$.  Recall that in a cyclic length metric 
\begin{equation}
\label{eq el 1}
\ell_{01}=\ell_{12}=\ell_{23}=\ell_{34}, \mbox{ and}
\end{equation}
\begin{equation}
\label{eq el 2}
\ell_{02}=\ell_{13}=\ell_{24}.
\end{equation}
Now consider the conformal class such that $L_{ij}$=1 for all $e_{ij}\in E$, except for $L_{12}=a\neq 1$.

Equation~\ref{eq el 1} implies both that $e^{\frac{1}{2}f_0} = a e^{\frac{1}{2}f_2}$ and $e^{\frac{1}{2}f_3} = a e^{\frac{1}{2}f_1}$.  Substituting this into the first equality in Equation~\ref{eq el 2} shows that $f_1=f_2$.  The equality $\ell_{12}=\ell_{34}$ then yields
\begin{eqnarray*}
a e^{\frac{1}{2}f_1} e^{\frac{1}{2}f_2} &=& e^{\frac{1}{2}f_3} e^{\frac{1}{2}f_4}\\
a (e^{\frac{1}{2}f_1})^2 &=& a e^{\frac{1}{2}f_1} e^{\frac{1}{2}f_4}\\
f_1 &=& f_4.
\end{eqnarray*}
But this implies that $l_{24}=e^{f_1}$ and $l_{13}=a e^{f_1}$.  Equation~\ref{eq el 2} then gives $a e^{f_1} = e^{f_1}$, which contradicts our choice of $a$.   Therefore this conformal class does not admit a cyclic length metric.
\end{proof}

One would like to know if there is some sort of generalization of a cyclic length metric that has constant scalar curvature and that occurs in all conformal classes.   Additionally, one might do an analysis similar to that in \cite{CGY} of the Hessians of the $\mathcal{LEHR}$ and $\mathcal{VEHR}$ at a cyclic length metric to determine the local behavior at these critical points.

\end{document}